\theoremstyle{plain}
\newtheorem{thm}{Theorem}[section]
\newtheorem{defi}[thm]{Definition}
\newtheorem{lem}[thm]{Lemma}
\newtheorem{cor}[thm]{Corollary}
\newtheorem{alg}[thm]{Algorithm}
\theoremstyle{definition}
\newtheorem{rem}[thm]{Remark}
\newtheorem{exa}[thm]{Example}
\renewcommand{\phi}{\varphi}
\def\l{\left}
\def\r{\right}
\def\as{\!\mathrel{\mathop:}=} 
\def\tr{\operatorname{Tr}}
\def\fun{\col\hs\to\exrls}
\def\map{\col\hs\to\hs}
\def\geo{\col[0,1]\to\hs}
\def\nat{\mathbb{N}}
\def\nato{{\mathbb{N}_0}}
\def\rls{\mathbb{R}}
\def\exrls{(-\infty,\infty]}
\def\eps{\varepsilon}
\def\col{\colon}
\def\ol{\overline}
\def\argmin{\operatornamewithlimits{\arg\min}}
\def\cldom{\operatorname{\ol{dom}}}
\def\ts{\mathcal{T}}                          
\def\expe{\mathbb{E}}                         
\def\mi{\operatorname{Min}}
\def\cf{\mathcal{F}}
\def\hs{\mathcal{H}}   
\begin{document}
\title[Computing medians and means]{Computing medians and means in Hadamard spaces}
\author[M. Ba\v{c}\'ak]{Miroslav Ba\v{c}\'ak}
\date{\today}
\subjclass[2010]{Primary: 49M27; 62E99; Secondary: 51F99; 92B05}
\keywords{Hadamard space, mean, median, proximal point algorithm, the law of large numbers, tree space, computational phylogenetics, diffusion tensor imaging.}
\thanks{The research leading to these results has received funding from the
 European Research Council under the European Union's Seventh Framework
 Programme (FP7/2007-2013) / ERC grant agreement no 267087.}

\address{Miroslav Ba\v{c}\'ak, Max Planck Institute for Mathematics in the Sciences, Inselstr.~22, 04103 Leipzig, Germany}
\email{bacak@mis.mpg.de}

\begin{abstract}
The geometric median as well as the Fr\'echet mean of points in a Hadamard space are important in both theory and applications. Surprisingly, no algorithms for their computation are hitherto known. To address this issue, we use a~splitting version of the proximal point algorithm for minimizing a~sum of convex functions and prove that this algorithm produces a sequence converging to a minimizer of the objective function, which extends a recent result of D.~Bertsekas (2011) into Hadamard spaces. The method is quite robust and not only does it yield algorithms for the median and the mean, but it also applies to various other optimization problems. We moreover show that another algorithm for computing the Fr\'echet mean can be derived from the law of large numbers due to K.-T.~Sturm (2002).

In applications, computing medians and means is probably most needed in tree space, which is an instance of a Hadamard space, invented by Billera, Holmes, and Vogtmann (2001) as a tool for averaging phylogenetic trees. Since there now exists a polynomial-time algorithm for computing geodesics in tree space due to M.~Owen and S.~Provan (2011), we obtain efficient algorithms for computing medians and means of trees, which can be directly used in practice.
\end{abstract}

\maketitle
\section{Introduction}

Given positive weights $w_1,\dots,w_N$ satisfying $\sum w_n=1,$ and a finite set of points $a_1,\dots,a_N$ in a metric space $(X,d),$ we define its \emph{geometric median} as
\begin{equation} \label{eq:median}
 \Psi\l(\ol{w};\ol{a}\r)\as\argmin_{x\in X} \sum_{n=1}^N w_n d\l(x,a_n\r), 
\end{equation}
and its \emph{Fr\'echet mean} as
\begin{equation}\label{eq:mean}
 \Xi\l(\ol{w};\ol{a}\r)\as\argmin_{x\in X} \sum_{n=1}^N w_n d\l(x,a_n\r)^2,
\end{equation}
where we denote $\ol{w}\as\l(w_1,\dots,w_N\r)$ and $\ol{a}\as\l(a_1,\dots,a_N\r).$ These definitions will not function well in an arbitrary metric space, but as far as geodesic metric spaces of nonpositive curvature (so-called Hadamard spaces) are concerned, they become highly appropriate. Hadamard spaces include, apart from Hilbert spaces, Euclidean buildings, and some Riemannian manifolds (so-called Hadamard manifolds), also the Billera-Holmes-Vogtmann tree space (BHV tree space), which is a nonpositively curved cubical complex constructed in~\cite{billera} as a model space for phylogenetic trees. Computing means and medians in this setting can therefore be of importance in computational phylogenetics. Another area where our algorithm can be possibly used is \emph{diffusion tensor imaging.} We return to both of them shortly. Now we would like to mention one more application. Very recently, Fr\'echet means have also emerged in connection with so-called \emph{consensus algorithms} in Hadamard spaces~\cite{
grohs}.

Our goal in the present paper is to introduce algorithms for computing medians and means in Hadamard spaces. Research in this direction has already started. The first algorithms for computing means in Hadamard spaces appeared in~\cite[Section 4]{feragen}. The authors of~\cite{feragen} propose three different methods for computing the Fr\'echet mean in Hadamard spaces, but unfortunately, all of these methods fail to converge to a correct value even in a very simple situation, as will be demonstrated in Remark~\ref{rem:counterex} below. These three method are called Birkhoff's shortening, the centroid method and the weighted average method and the interested reader is referred to~\cite[Section 4]{feragen} for their respective definitions.
\begin{rem}\label{rem:counterex}
Let $(\hs,d)$ be a Hadamard space consisting of three geodesic rays issuing from the origin $0.$ This is an $\rls$-tree, and as a matter of fact the BHV tree space~$\ts_3.$ Consider three points $x,y,z\in\hs$ lying in distinct rays issuing from the origin~$0$ such that $d(0,z)=5,$ and $d(0,x)=d(0,y)=1.$ Then it is easy to see that the Fr\'echet mean $\Xi$ of $x,y,z$ with the uniform weights $\frac13,\frac13,\frac13$ lies on the geodesic $[0,z]$ and $d(0,\Xi)=1.$ On the other hand if we apply Birkhoff's shortening or the centroid method, we will get a point $c\in[0,z]$ such that $d(0,c)>\frac54.$ Finally, the weighted average method of the points $x,y,z$ yields a point $w\in[0,z]$ with $d(0,w)=\frac53,$ or the origin $0,$ depending on the order we choose.
\end{rem}
As we will observe in Section~\ref{sec:lln}, one way to approximate the Fr\'echet mean is an algorithm based on the law of large numbers. This observation was independently made also in~\cite{miller}. Our main result in this paper is the splitting proximal point algorithm which applies to computing both medians and means. As a matter of fact, this method can be used in a much broader class of optimization problems, which we describe later in this Introduction.

It is worth mentioning that in spite of an apparent similarity between~\eqref{eq:median} and~\eqref{eq:mean}, there is a substantial difference in the complexity of computing the median and the mean even in Euclidean spaces. While it is trivial to find a mean in finitely many steps, see~\eqref{eq:arithmetic} below, there exists no formula for computing a median in~$\rls^d,$ and we can use only approximation algorithms; see~\cite{bose} and the references therein. Another difference between the median and mean is that the former is not unique, that is, the set $\Psi\l(\ol{w};\ol{a}\r)$ contains more than one point in general, whereas $\Xi\l(\ol{w};\ol{a}\r)$ is always a singleton; see Theorem~\ref{thm:average}.

\subsection*{The BHV tree space and statistical biology} 
To increase the motivation and whet the appetite even more we will now take a closer look at the BHV space and its applications. The BHV tree space is a CAT(0) cubical complex whose elements are metric trees with a fixed number of terminal nodes and lengths assigned to all edges~\cite{billera}.

Metric trees with a fixed number of terminal vertices can represent evolutionary trees in phylogenetics. Then, given a finite collection of such trees, it is desirable to find an average tree. A natural candidate for this average is the Fr\'echet mean. On the other hand until now, no algorithm for its computation was available in the BHV tree space and therefore alternative concepts of an average were used in practice, for instance, a centroid~\cite{billera}, or a majority consensus~\cite{nye}. However, with our algorithms at hand, one can efficiently compute the Fr\'echet mean itself. Building upon these algorithms, a novel statistical model for phylogenetic inference was developed in~\cite{benner}. For a general mathematical background of contemporary phylogenetics, the reader is referred to~\cite{dress,pachter,semple}.

Apart from phylogenetics, tree-like structures emerge naturally in other subject fields of biology and computing an average tree is again of interest. Let us mention applications to the modeling of airway systems in human lungs~\cite{feragen,feragen2} and blood vessels~\cite{marron}.

All the algorithms presented in this paper require computing geodesics in the underlying Hadamard space. While it can be a difficult task in a general Hadamard space, there exists an efficient polynomial-time algorithm for computing geodesics in the BHV tree space due to M.~Owen and S.~Provan~\cite{owen,owenprovan}, which makes our algorithms directly applicable in practice.

\subsection*{Diffusion tensor imaging}

The space $P(n,\rls)$ of symmetric positive definite matrices $n\times n$ with real entries is a Hadamard manifold provided it is equipped with the Riemannian metric
\begin{equation*}
 \langle X,Y\rangle_A\as\tr\l(A^{-1}XA^{-1}Y \r),\qquad X,Y\in T_A\l(P(n,\rls)\r),
\end{equation*}
for every $A\in P(n,\rls);$ see~\cite[p. 314]{bh}. This manifold plays a key role in \emph{diffusion tensor imaging} as explained in~\cite{pennec,pennec2} and computing Fr\'echet means of a finite family of symmetric positive definite matrices is one of the crucial operations \cite[Section 3.7]{pennec}. Our algorithms can therefore find an application also in this area. On the other hand, it is unknown to the author whether there exists an efficient algorithm for computing geodesics in~$P(n,\rls).$


\subsection*{The proximal point algorithm and its applications}

Having explained the importance of medians and means in Hadamard spaces and the need for their computations, we will now introduce our tools.

Let $(\hs,d)$ be a Hadamard space. Our algorithms for computing medians and means are based on a~special version of the proximal point algorithm (PPA) for minimizing a convex function~$f$ of the form
\begin{equation} \label{eq:f}
f\as\sum_{n=1}^N f_n,
\end{equation}
where $f_n\fun$ are all convex and lower semicontinuous (lsc). The main trick here is that instead of applying iteratively the resolvent
\begin{equation*} J_\lambda(x)\as\argmin_{y\in \hs}\l[f(y)+\frac{1}{2\lambda}d(x,y)^2\r]\end{equation*}
of the function $f,$ we apply the resolvents
\begin{equation*} J_\lambda^n(x)\as\argmin_{y\in \hs}\l[f_n(y)+\frac{1}{2\lambda}d(x,y)^2\r]\end{equation*}
of its components $f_n$ either in cyclic or random order; see Definitions~\ref{def:cyclic} and~\ref{def:random} for the precise formulations of the algorithms. Such algorithms have been recently shown by D.~Bertsekas~\cite{bertsekas} to converge to a minimizer of $f$ when the underlying space is $\rls^d.$ We extend these results into locally compact Hadamard spaces and then apply them with $f_n$ being $f_n=w_n d\l(\cdot,a_n\r)$ in the case of the median~\eqref{eq:median}, and with $f_n= w_n d\l(\cdot,a_n\r)^2$ in the case of the mean~\eqref{eq:mean}. In either case, it is then easy to find explicit formulas for the resolvents $J_\lambda^n,$ and one hence obtains simple algorithms for computing medians and means, respectively. The detailed description is given in Section~\ref{sec:mm}. We note that Bertsekas' paper~\cite{bertsekas} presents a much more general approach to convex minimization problems using for instance various types of gradient methods, which however seem to be difficult to generalize into our setting. We also refer the interested reader to Bertsekas' paper for historical remarks about the use of splitting methods going back to Lions\&Mercier~\cite{lionsmercier} and Passty~\cite{passty}. In this connection, we also recommend~\cite{combettes}. The resolvents $J_\lambda$ of convex lsc functions in Hadamard spaces were first studied by J.~Jost~\cite{jost95} and U.~Mayer~\cite{mayer}.

While the present paper was under review, the following two papers appeared: S.~Ohta and M.~P\'alfia~\cite{ohtapalfia} extended the cyclic order version of the splitting PPA to other classes of geodesic spaces and S.~Banert~\cite{banert} studied the splitting PPA for two functions in (non locally compact) Hadamard spaces.

\subsection*{On the objective function}

The function~\eqref{eq:f} can apparently accommodate a variety of problems. For instance, we can put
\begin{equation} \label{eq:weighted}
 f(x)\as \sum_{n=1}^N w_n d\l(x,a_n\r)^p,\qquad x\in \hs,
\end{equation}
where $p\in[1,\infty).$ Then~$f$ is convex continuous and encompasses medians and means as special cases:
\begin{enumerate}
 \item If $p=1,$ then $f$ becomes the objective function in the \emph{Fermat-Weber problem} for optimal facility location and its minimizer is a median of the points $a_1,\dots,a_N$ with weights $w_1,\dots,w_N.$
 \item If $p=2,$ then a minimizer of~$f$ is the \emph{barycenter} of the probability measure
\begin{equation*} \pi\as\sum_{n=1}^N w_n \delta_{a_n},\end{equation*}
where $\delta_{a_n}$ stands for the Dirac measure at the point $a_n.$ In other words the mean of the points $a_1,\dots,a_N$ can be equivalently viewed as the barycenter of~$\pi.$ Barycenters of probability measures on Hadamard spaces were first studied by J.~Jost \cite{jost94}. For further details, the reader is referred to~\cite[Chapter 3]{jost2} and~\cite{sturm-conm} and~\cite[Chapter 2]{mybook}. 
\end{enumerate}

Another way of generalizing~\eqref{eq:median}, and \eqref{eq:mean} alike, is to replace the points $a_1,\dots,a_N$ by convex closed sets $C_1,\dots,C_N\subset \hs.$ Since the distance functions to such sets are convex continuous (see Example~\ref{exa:dist} below) in Hadamard spaces, the objective function in this problem is of the form~\eqref{eq:f}. Namely, we are to minimize the function
\begin{equation} \label{eq:boris}
 f(x) \as \sum_{n=1}^N w_n d\l(x;C_n\r),\qquad x\in \hs.
\end{equation}
Problems of this type have been recently studied from various perspectives~\cite{boris3,boris1,boris2}. Our approach, based on the proximal point algorithm, seems to be however novel even in linear spaces. An explicit algorithm is given in~\ref{rem:boris}.

We will now present several natural examples of convex lsc functions in Hadamard spaces to see that the proximal point algorithm is applicable in many more situations. Let still $(\hs,d)$ be a Hadamard space.
\begin{exa}[Indicator functions] \label{exa:indicator}
Let $C\subset \hs$ be a convex set. Define the~\emph{indicator function} of $C$ by
\begin{equation*} \iota_C(x)\as\l\{
\begin{array}{ll} 0, & \text{if } x\in C,  \\ \infty,  & \text{if } x\notin C. \end{array} \r. \end{equation*}
Then $\iota_C$ is a convex function and it is lsc if and only if $C$ is closed.
\end{exa}
The indicator function is often employed to convert a constrained minimization problem into an unconstrained one. Indeed, the minimization of the function~\eqref{eq:f} on a closed convex set $C\subset \hs$ is equivalent to the minimization of
\begin{equation*} \tilde{f}\as\iota_C+\sum_{n=1}^N f_n,\end{equation*}
over the whole space~$\hs.$
\begin{exa}[Distance functions] \label{exa:dist}
The function
\begin{equation}\label{eq:dist} x\mapsto d\l(x,x_0\r),  \quad x\in \hs,\end{equation}
where $x_0$ is a fixed point of $ \hs,$ is convex and continuous. The function $d\l(\cdot,x_0\r)^p$ for $p>1$ is strictly convex. More generally, the \emph{distance function} to a closed convex subset $C\subset \hs,$ defined as
\begin{equation*}d(x;C)\as\inf_{c\in C} d(x,c),  \quad x\in \hs,\end{equation*}
is convex and $1$-Lipschitz~\cite[p.178]{bh}.
\end{exa}
\begin{exa}[Displacement functions]
Let $T\map$ be an isometry. The \emph{displacement function} of $T$ is the function $\delta_T\col\hs\to[0,\infty)$ defined by
\begin{equation*}\delta_T(x)\as d(x,Tx),\quad x\in  \hs.\end{equation*}
It is convex and Lipschitz~\cite[p.229]{bh}.
\end{exa}
\begin{exa}[Busemann functions] \label{exa:busemann}
Let $c\col[0,\infty)\to  \hs$ be a geodesic ray. The function $b_c\col \hs\to\rls$ defined by
\begin{equation*} b_c(x)\as \lim_{t\to\infty} \l[d\l(x,c(t) \r) -t \r],  \quad x\in  \hs,\end{equation*}
is called the \emph{Busemann function} associated to the ray $c.$ Busemann functions are convex and $1$-Lipschitz. Concrete examples of Busemann functions are given in \cite[p.~273]{bh}. Another explicit example of a Busemann function in the Hadamard space of positive definite $n\times n$ matrices with real entries can be found in~\cite[Proposition~10.69]{bh}. The sublevel sets of Busemann functions are called \emph{horoballs} and carry a lot of information about the geometry of the space in question; see \cite{bh} and the references therein.
\end{exa}
\begin{exa}[Energy functional]
The energy functional is a convex lsc function on a Hadamard space of $\mathcal{L}^2$-mappings. It has been studied extensively at a varying level of generality \cite{fuglede2,fuglede1,gromov,jost95,jost97,ks}. Minimizers of the energy functional are called \emph{harmonic maps} and are of importance in both geometry and analysis. For a probabilistic approach to harmonic maps in Hadamard spaces, see~\cite{sturm-markov1,sturm-markov2,sturm-semigr}.
\end{exa}

We explicitly mention yet another application of the above version of the proximal point algorithm. Namely, if $C_1,\dots,C_N$ are closed convex subsets of $\hs$ such that
\begin{equation*}C_1\cap\dots\cap C_N\neq\emptyset,\end{equation*}
and if we set $f_n\as\iota_{C_n}$ in~\eqref{eq:f}, then it is easy to see that 
\begin{equation*}J_\lambda^n(x)=P_{C_n}(x),\end{equation*}
for every $x\in \hs,\;\lambda>0,$ and $n=1,\dots,N.$ Here $P_{C_n}$ stands for the metric projection onto $C_n;$ see Section~\ref{sec:preli}. The proximal point algorithm hence becomes the method of cyclic and random projections, respectively, and converges to a point $c\in\bigcap_{n=1}^N C_n.$ Such algorithms play an important role in optimization, for instance in convex feasibility problems; see~\cite{bauschketams,bauschke,baucom} and the references therein. If $N=2,$ both cyclic and random orders of the projections give the same approximating sequence (modulo repeating elements), and we get the so-called alternating projections, which were in Hadamard spaces studied in~\cite{apm}.

\subsection*{The Lie-Trotter-Kato formula}
There is also a tight connection to gradient flow semigroups for a function of the form~\eqref{eq:f}, since the proximal point algorithm is a~discrete time version of the gradient flow. 

We need the following notation. If $F\map$ is a mapping, we denote its $k$-th~power, with $k\in\nat,$ by
\begin{equation*}F^{(k)}x\as\l(F\circ\dots\circ F\r) x,\quad x\in \hs,\end{equation*}
where $F$ appears $k$-times on the right hand side.

Recall that the \emph{gradient flow semigroup} $\l(S_t\r)_{t\geq0}$ of $f$ is given as
\begin{equation} \label{eq:defsem}
S_t x\as\lim_{k\to\infty} \l(J_{\frac{t}k}\r)^{(k)} (x),\quad x\in \cldom f.
\end{equation}
The limit in \eqref{eq:defsem} is uniform with respect to $t$ on bounded subintervals of $[0,\infty),$ and $\l(S_t\r)_{t\geq0}$ is a strongly continuous semigroup of nonexpansive mappings; see \cite[Theorem~1.3.13]{jost-ch} and \cite[Theorem~1.13]{mayer}. Note however that formula~\eqref{eq:defsem} was in a nonlinear space used already in~\cite[Theorem 8.1]{reichshafrir}. In the same way we define the semigroups~$S_t^n$ of the components~$f_n,$ using the appropriate resolvents~$J_\lambda^n,$ for $n=1,\dots,N.$

The following nonlinear version of the Lie-Trotter-Kato formula was proved in~\cite{stoj}. It shows that, given a function $f$ of the form~\eqref{eq:f}, we can approximate the semigroup of $f$ by the resolvents of the components~$f_n.$
\begin{thm}[Stojkovic]
Let $(\hs,d)$ be a Hadamard space and $f\fun$ be of the form~\eqref{eq:f}. Then we have
\begin{equation} \label{eq:stoj}
S_t x =  \lim_{k\to\infty} \l(J_{\frac{t}{k}}^N\circ\dots\circ J_{\frac{t}{k}}^1\r)^{(k)}(x),
\end{equation}
for every $t\in[0,\infty)$ and $x\in\cldom f.$
\end{thm}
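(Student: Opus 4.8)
The plan is to regard the product map $T_\lambda\as J_\lambda^N\circ\dots\circ J_\lambda^1$ as an \emph{approximate resolvent} of the sum $f$. Concretely, I would show that $T_\lambda$ satisfies, up to an error that vanishes as $\lambda\downarrow 0$, exactly the backward-Euler (discrete evolution variational) inequality obeyed by the genuine resolvent $J_\lambda$ of $f$, and then compare the two iteration schemes $\l(T_{t/k}\r)^{(k)}x$ and $\l(J_{t/k}\r)^{(k)}x$ by a discrete Gronwall argument. Since the latter converges to $S_t x$ by the very definition~\eqref{eq:defsem} of the gradient-flow semigroup, this yields~\eqref{eq:stoj}.

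For the ingredients I would first recall the two standard facts about each resolvent $J_\lambda^n$ (due to Jost and Mayer): it is a nonexpansive self-map of $\hs$ — hence $T_\lambda$ is nonexpansive as well — and for all $x,y\in\hs$ it obeys the three-point inequality
\begin{equation*}
f_n(y)+\tfrac{1}{2\lambda}d(x,y)^2\ \ge\ f_n\l(J_\lambda^n x\r)+\tfrac{1}{2\lambda}d\l(J_\lambda^n x,x\r)^2+\tfrac{1}{2\lambda}d\l(J_\lambda^n x,y\r)^2 ,
\end{equation*}
which in particular gives the descent estimate $d\l(J_\lambda^n x,x\r)^2\le 2\lambda\l(f_n(x)-f_n(J_\lambda^n x)\r)$. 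Writing $x_0\as x$, $x_n\as J_\lambda^n x_{n-1}$ (so $T_\lambda x=x_N$) and summing these inequalities over $n=1,\dots,N$ against a fixed comparison point $y$, the terms $d(x_n,y)^2$ telescope and one is left with
\begin{equation*}
\tfrac{1}{2\lambda}\l[d\l(T_\lambda x,y\r)^2-d(x,y)^2\r]+\sum_{n=1}^N f_n\l(x_n\r)\ \le\ f(y),\qquad y\in\hs .
\end{equation*}
The descent estimates also force $\sum_{n=1}^N d\l(x_{n-1},x_n\r)^2=O(\lambda)$, hence $\max_n d\l(x_n,T_\lambda x\r)=O\l(\sqrt\lambda\r)$ on bounded sets; replacing each $f_n(x_n)$ by $f_n(T_\lambda x)$ — at the cost of an error controlled by the local Lipschitz behaviour of the $f_n$ — turns the last display into the desired approximate evolution variational inequality
\begin{equation*}
\tfrac{1}{2\lambda}\l[d\l(T_\lambda x,y\r)^2-d(x,y)^2\r]+f\l(T_\lambda x\r)\ \le\ f(y)+\omega(\lambda),\qquad \omega(\lambda)=O\l(\sqrt\lambda\r),
\end{equation*}
valid for all $y\in\hs$, whereas $J_\lambda$ (the resolvent of $f$ itself) satisfies the same inequality with $\omega\equiv 0$.

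With both schemes framed as discrete variational-inequality solutions the comparison is routine Hadamard-space bookkeeping. Fix $t>0$, set $\lambda\as t/k$, $a_j\as\l(J_\lambda\r)^{(j)}x$, $b_j\as\l(T_\lambda\r)^{(j)}x$; insert $y=b_{j+1}$ in the inequality for $a$ and $y=a_{j+1}$ in the one for $b$, add them, and apply a quadrilateral (Reshetnyak-type) inequality of $\mathrm{CAT}(0)$ spaces to the four points $a_j,a_{j+1},b_{j+1},b_j$. This gives
\begin{equation*}
d\l(a_{j+1},b_{j+1}\r)^2\ \le\ d\l(a_j,b_j\r)^2+d\l(a_j,a_{j+1}\r)^2+d\l(b_j,b_{j+1}\r)^2+2\lambda\,\omega(\lambda),
\end{equation*}
and summing over $j=0,\dots,k-1$, with $a_0=b_0=x$ and the telescoped descent estimates bounding $\sum_j d(a_j,a_{j+1})^2$ and $\sum_j d(b_j,b_{j+1})^2$ by $O(\lambda)+O(k\lambda\omega(\lambda))$, yields $d\l(a_k,b_k\r)^2=O(\lambda)+O\l(t\,\omega(t/k)\r)\to 0$, i.e.\ the explicit rate $d\l(a_k,b_k\r)=O\l(k^{-1/4}\r)$. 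Hence $\l(T_{t/k}\r)^{(k)}x$ and $\l(J_{t/k}\r)^{(k)}x$ have the same limit, which by~\eqref{eq:defsem} is $S_t x$; this proves~\eqref{eq:stoj} for $x\in\dom f$, and for arbitrary $x\in\cldom f$ it follows from density together with the uniform-in-$k$ nonexpansiveness of both iterates. The same estimates, being uniform for $t$ in bounded intervals, give the uniformity of the convergence there.

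The genuinely delicate point — and the step I expect to be the main obstacle — is the passage from $\sum_n f_n(x_n)$ to $f\l(T_\lambda x\r)=\sum_n f_n\l(x_N\r)$ when the $f_n$ are merely lower semicontinuous, since then $T_\lambda x$ need not lie in $\dom f_n$ and the substitution error need not even be finite. For the everywhere-finite components relevant here (the median $f_n=w_n d(\cdot,a_n)$ and the mean $f_n=w_n d(\cdot,a_n)^2$ are globally, resp.\ locally, Lipschitz) this causes no trouble; in general I would prove the formula first for the Moreau envelopes $f_n^{(\mu)}$, which are real-valued and locally Lipschitz so that the argument above applies verbatim, and then let $\mu\downarrow 0$, using the convergence of the regularized resolvents $J_\lambda^{n,(\mu)}\to J_\lambda^n$ and of the regularized semigroups $S_t^{(\mu)}\to S_t$. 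A secondary difficulty is that $\hs$ is not assumed locally compact, so no Arzel\`a--Ascoli extraction is available and the convergence genuinely has to be produced quantitatively, which is precisely why the Gronwall estimate above is carried out with explicit constants rather than by a compactness argument; everything else — well-posedness and nonexpansiveness of the resolvents, the three-point inequality, the quadrilateral inequality — is standard.
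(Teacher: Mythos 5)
You should first be aware that the paper does not actually prove this theorem: the displayed ``proof'' only points to two external arguments, Stojkovic's original one via ultralimits of Hadamard spaces and a later one via weak convergence. Your route --- treating $T_\lambda:=J_\lambda^N\circ\dots\circ J_\lambda^1$ as an approximate resolvent of $f$ through telescoped three-point inequalities, then comparing $(T_{t/k})^{(k)}x$ with $(J_{t/k})^{(k)}x$ by a Reshetnyak-based discrete Gronwall estimate --- is therefore genuinely different from both. Its skeleton is sound: the three-point inequality you quote is exactly Lemma~\ref{lem:estim}, the telescoping and the quadrilateral step (Lemma~\ref{lem:resh} combined with $2ab\le a^2+b^2$) are correct as written, and the bound $\sum_j d(a_j,a_{j+1})^2=O(\lambda)$ for the exact scheme does hold for fixed $t>0$ even when $f$ is unbounded below, since $f(a_0)-f(a_k)\le f(x)-f(y)+\tfrac{1}{2t}d(x,y)^2$ for any $y\in\dom f$. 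For Lipschitz components --- hence for the medians, means and distance-to-set problems this paper actually uses the formula for --- your argument gives a complete quantitative proof with an explicit rate, which is more than either cited proof delivers.

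The genuine gap is where you suspect it, but it is deeper than you indicate. First, even in the locally Lipschitz case the error $\omega(\lambda)$ in your approximate evolution variational inequality depends on the base point (through the Lipschitz constant on a ball containing the intermediate iterates $x_1,\dots,x_N$); in the Gronwall sum you invoke it at every $b_j$, so you need an a priori bound confining the whole orbit $\{b_j\}$ to one ball on which a single constant $L$ works. This is fixable by a short bootstrap ($d(b_j,x)\le 2jN\lambda L\le 2NtL$ once $L$ is valid on that ball), but it must be said. Second, and more seriously, the proposed reduction from general lsc components to their Moreau envelopes does not close the argument: to interchange the limits $\mu\downarrow 0$ and $k\to\infty$ you would need the product-formula convergence to be uniform in $\mu$, whereas your rate degenerates as $\mu\downarrow 0$ because the Lipschitz constants of the envelopes $f_n^{(\mu)}$ on the relevant ball blow up whenever $f_n$ is not itself locally Lipschitz and finite. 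This double-limit obstruction is precisely what the ultralimit and weak-convergence proofs are designed to circumvent; as it stands, your argument establishes \eqref{eq:stoj} only for (locally) Lipschitz, everywhere finite $f_n$, which covers the applications in this paper but not the theorem as stated.
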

\begin{proof}
The proof given in~\cite{stoj} uses ultralimits of Hadamard spaces. A simpler proof relying on weak convergence appeared in~\cite{lie}. 
\end{proof}

\subsection*{The law of large numbers in Hadamard space}
The law of large numbers is related to the Fr\'echet mean in the classical linear setting as well as in Hadamard spaces. For a~historical remark, we refer the interested reader to~\cite[Remark 2.7a]{sturm}. As we shall see in the sequel, more precisely in Section~\ref{sec:lln}, the probabilistic point of view enables us to find an alternative algorithm for computing the Fr\'echet mean. We shall also compare the algorithms based on the PPA with this algorithm based on the law of large numbers in Section~\ref{sec:lln}.

It is worth mentioning that there exists a slightly different approach to barycenters as well as to the law of large numbers due to A.~Es-Sahib and H.~Heinich~\cite{sh} which is not equivalent to the approach mentioned above; see~\cite[Example~6.5]{sturm-conm}. For related ergodic theorems, we refer the interested reader to the recent papers~\cite{austin,navas}.

The author was informed that the algorithm relying upon the law of large numbers was independently discovered by E.~Miller, M.~Owen, and S.~Provan~\cite{miller}.
 
\subsection*{The organization of the paper} The following Section~\ref{sec:preli} is devoted to the rudiments of Hadamard space theory including a discussion on medians and means. The main results of the present paper are contained in Section~\ref{sec:ppa}. We prove that both the cyclic and random order versions of the PPA converge to a minimizer of the function in question. In Section~\ref{sec:mm} we apply the PPA to the case of medians and means, respectively, and obtain explicit and user-friendly algorithms for their computations. The last part, Section~\ref{sec:lln}, is devoted to an alternative algorithm for Fr\'echet means which relies upon the law of large numbers.

\subsection*{Acknowledgments} I would like to thank Megan Owen for bringing the question of computing medians in the BHV tree space to my attention and sharing her insight with me. I am also very grateful to Aasa Feragen, Ezra Miller, Tom Nye and Sean Skwerer for many inspiring discussions on this and related subjects during the \emph{Workshop on Geometry and Statistics in Bioimaging: Manifolds and Stratified spaces} in S{\o}nderborg, Denmark, in October 2012. Special thanks go to Philipp Benner, Martin Kell and Ezra Miller for their helpful comments on earlier versions of the manuscript. It is my pleasure to thank the referees for their valuable remarks and suggestions.

\section{Preliminaries} \label{sec:preli}
 
\subsection*{Hadamard spaces.}
We will now recall basic facts on Hadamard spaces. For further details on the subject, we refer the reader to~\cite{bh,jost2} or~\cite{mybook}. We adopt usual analysis/optimization notation. Positive and nonnegative integers are denoted by~$\nat$ and $\nato,$ respectively.

A metric space $(X,d)$ is called \emph{geodesic} if for each pair of points $x,y\in X$ there exists a geodesic which connects them. That is, there exists a mapping $\gamma\col[0,1]\to X$ such that $\gamma(0)=x,\gamma(1)=y,$ and
\begin{equation*}d\l(\gamma(s),\gamma(t)\r)=d(x,y)\:|s-t|,\end{equation*}
for $s,t\in[0,1].$ If for each point $z\in X,$ geodesic $\gamma\col[0,1]\to X,$ and $t\in[0,1],$ we have
\begin{equation} \label{eq:cat}
d\l(z,\gamma(t)\r)^2\leq (1-t) d\l(z,\gamma(0)\r)^2+td\l(z,\gamma(1)\r)^2-t(1-t) d\l(\gamma(0),\gamma(1)\r)^2,
\end{equation}
the space $(X,d)$ is called CAT(0). This property in particular implies that every two points are connected by a \emph{unique} geodesic. A complete CAT(0) space is called a \emph{Hadamard space.}

We will moreover assume the Hadamard spaces in our theorems be locally compact. Apart from the BHV tree space described in the Introduction, the class of locally compact Hadamard spaces includes Euclidean spaces, hyperbolic spaces, complete simply connected Riemannian manifolds of nonpositive sectional curvature (e.g. $P(n,\rls)$ mentioned above), Euclidean buildings, locally compact $\rls$-trees and CAT(0) complexes. The algorithm in Section~\ref{sec:lln} however works without the local compactness assumption.

We will now recall an inequality which goes back to the work of Reshetnyak. Its modern proof can be found in~\cite[Proposition 2.4]{sturm-conm}, or in~\cite[Lemma 2.1]{lang-gafa}.
\begin{lem} \label{lem:resh}
 Let $(\hs,d)$ be a Hadamard space. Then we have
\begin{equation*} d(x,y)^2 +d(u,v)^2\leq d(x,v)^2+d(y,u)^2 +2d(x,u)d(y,v),\end{equation*}
for any points $x,y,u,v\in\hs.$
\end{lem}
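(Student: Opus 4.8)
The goal is to prove Reshetnyak's inequality
\begin{equation*}
d(x,y)^2 + d(u,v)^2 \leq d(x,v)^2 + d(y,u)^2 + 2\,d(x,u)\,d(y,v)
\end{equation*}
in a Hadamard space $(\hs,d)$. The natural route is through the \emph{four-point condition} / comparison with the Euclidean plane: in a CAT(0) space, any four points admit a ``subembedding'' into $\rls^2$, i.e.\ there exist $\tilde x,\tilde y,\tilde u,\tilde v\in\rls^2$ whose pairwise Euclidean distances are all $\geq$ the corresponding distances in $\hs$ for the four ``sides'' and $\leq$ for the two ``diagonals'' --- or, more robustly, I would use the version where all six distances can be taken to \emph{dominate} the ones in $\hs$ appropriately. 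Since I am allowed to use only what is stated earlier in the excerpt, and the four-point condition is not recalled there, I would instead give a direct proof from the CAT(0) inequality~\eqref{eq:cat}.

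\textbf{First step: reduce to the plane via Reshetnyak's majorization or a direct midpoint argument.} Let $m$ be the midpoint of $x$ and $u$, and $m'$ the midpoint of $y$ and $v$. Apply the CAT(0) inequality~\eqref{eq:cat} with $t=\tfrac12$ twice: once to the geodesic $[x,u]$ with the external point $y$, once with the external point $v$; and once to the geodesic $[y,v]$ with external points $x$ and $u$. This yields
\begin{align*}
d(y,m)^2 &\leq \tfrac12 d(y,x)^2 + \tfrac12 d(y,u)^2 - \tfrac14 d(x,u)^2,\\
d(v,m)^2 &\leq \tfrac12 d(v,x)^2 + \tfrac12 d(v,u)^2 - \tfrac14 d(x,u)^2,
\end{align*}
and the analogous pair bounding $d(x,m')^2$ and $d(u,m')^2$ in terms of distances to $y,v$ and $d(y,v)^2$. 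The idea is then to estimate $d(m,m')$ from below (it is at least $d(x,y) - \tfrac12 d(x,u) - \tfrac12 d(y,v)$ by the triangle inequality, and symmetrically) and from above (again via~\eqref{eq:cat} applied to one of the two geodesics with the other midpoint as external point), and combine.

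\textbf{Combining the estimates.} Applying~\eqref{eq:cat} to $[x,u]$ with external point $m'$ gives
\begin{equation*}
d(m',m)^2 \leq \tfrac12 d(m',x)^2 + \tfrac12 d(m',u)^2 - \tfrac14 d(x,u)^2,
\end{equation*}
and substituting the bounds for $d(m',x)^2$ and $d(m',u)^2$ produces
\begin{equation*}
d(m,m')^2 \leq \tfrac14\bigl[d(x,y)^2 + d(x,v)^2 + d(u,y)^2 + d(u,v)^2\bigr] - \tfrac14 d(x,u)^2 - \tfrac14 d(y,v)^2.
\end{equation*}
On the other hand, the triangle inequality gives $d(x,y) \leq d(x,m) + d(m,m') + d(m',y) \leq \tfrac12 d(x,u) + d(m,m') + \tfrac12 d(y,v)$, and the symmetric inequality with $u,v$ in place of $x,y$; adding the squares of $d(x,y) - \tfrac12 d(x,u) - \tfrac12 d(y,v) \leq d(m,m')$ and $d(u,v) - \tfrac12 d(x,u) - \tfrac12 d(y,v) \leq d(m,m')$ after a short rearrangement, and comparing with the upper bound above, should collapse exactly to the claimed inequality. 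The main obstacle is the bookkeeping: making sure the lower bound on $d(m,m')$ is organized so that squaring and adding the two triangle inequalities produces precisely the cross term $2\,d(x,u)\,d(y,v)$ with the right sign, rather than a weaker bound; one has to be a little careful because $d(x,y) - \tfrac12 d(x,u) - \tfrac12 d(y,v)$ need not be nonnegative, so the squaring step must be phrased via $|d(m,m')| \geq |\,\cdot\,|$ or handled by noting that if either expression is negative the corresponding inequality is trivial. Once the algebra is set up symmetrically in the two diagonals, the result follows; this is exactly the computation carried out in \cite[Proposition 2.4]{sturm-conm} and \cite[Lemma 2.1]{lang-gafa}, to which I would refer for the detailed verification.
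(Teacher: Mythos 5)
There is a genuine gap, and it sits exactly where you flagged it: the final combination step does not close. Your intermediate estimates are all correct --- writing $a=d(x,y)$, $b=d(u,v)$, $c=d(x,v)$, $e=d(y,u)$, $p=d(x,u)$, $q=d(y,v)$ and $D=d(m,m')$, the three applications of~\eqref{eq:cat} do give
\begin{equation*}
D^2 \leq \tfrac14\bigl(a^2+b^2+c^2+e^2\bigr)-\tfrac14\bigl(p^2+q^2\bigr),
\end{equation*}
and the triangle inequality gives $D\geq a-\tfrac12(p+q)$ and $D\geq b-\tfrac12(p+q)$. But squaring these two lower bounds, adding them, and comparing with twice the upper bound yields, after the rearrangement you describe,
\begin{equation*}
a^2+b^2\leq c^2+e^2+2pq+2(p+q)\bigl[(a+b)-(p+q)\bigr],
\end{equation*}
which implies the claimed inequality only when $a+b\leq p+q$. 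That fails in general: take $u$ close to $x$ and $v$ close to $y$ with $x\neq y$ (already in $\rls$: $x=0$, $y=1$, $u=\eps$, $v=1+\eps$), so that $p+q=2\eps$ while $a+b=2$; the extra term $2(p+q)(a+b-p-q)\approx 8\eps$ then dwarfs the target's cross term $2pq=2\eps^2$. Moreover, no reweighting helps: using $\alpha\bigl(a-\tfrac12(p+q)\bigr)^2+\beta\bigl(b-\tfrac12(p+q)\bigr)^2\leq(\alpha+\beta)D^2$ forces $\alpha=\beta$ in order to balance the coefficients of $a^2$ and $b^2$, and then reproduces exactly the same deficit. The structural problem is that $a^2$ and $b^2$ reappear with coefficient $\tfrac14$ inside your upper bound for $D^2$, while the triangle-inequality lower bound is only linear in $a$ (resp.\ $b$) and so, upon squaring, introduces the cross terms $-(a+b)(p+q)$; these two losses do not cancel. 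So this is not a bookkeeping issue: the two-midpoint scheme cannot produce the sharp term $2pq$.

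For comparison, the paper gives no proof of this lemma at all --- it only records the statement and points to \cite[Proposition~2.4]{sturm-conm} and \cite[Lemma~2.1]{lang-gafa} --- and the arguments in those references are not the midpoint computation you sketch, so deferring the "detailed verification" to them does not repair the argument. Your opening suggestion is the viable one: reduce the quadruple to $\rls^2$ via a subembedding/majorization and use the Euclidean identity $|\tilde x-\tilde y|^2+|\tilde u-\tilde v|^2-|\tilde x-\tilde v|^2-|\tilde y-\tilde u|^2=-2\langle \tilde x-\tilde u,\tilde y-\tilde v\rangle\leq 2|\tilde x-\tilde u|\,|\tilde y-\tilde v|$; alternatively, a direct proof from~\eqref{eq:cat} is possible but uses a single comparison point on one of the geodesics with a parameter chosen according to the ratio of $p$ and $q$, not the midpoints. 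Either way, the step from your correct intermediate inequalities to the conclusion is missing and cannot be filled as written.
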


Given a pair of points $x,y\in \hs,$ we denote $(1-t)x+ty=\gamma(t),$ where $\gamma$ is the geodesic connecting $x$ and $y.$ We say that a set $C\subset \hs$ is \emph{convex} provided $x,y\in C$ implies $(1-t)x+ty\in C$ for each $t\in [0,1].$ Furthermore, we say that a function $f\fun$ is \emph{convex} if the function $f\circ\gamma\col[0,1]\to\exrls$ is convex for every geodesic $\gamma\geo.$

Let $(\hs,d)$ be a Hadamard space and $C\subset \hs$ be a convex closed set. Then for each $x\in \hs$ there exists a unique point $c\in C$ such that
\begin{equation*} d(x,c)=\inf_{y\in C} d(x,y),\end{equation*}
and we denote this point $c$ by $P_C(x).$ The mapping $P_C\col\hs\to C$ is nonexpansive and we call it the \emph{metric projection} onto the set~$C.$ 

Given a function $f\fun,$ we say that a point $z\in \hs$ is a minimizer of~$f$ if 
\begin{equation*} f(z)=\inf_{x\in \hs} f(x).\end{equation*}
The set of all minimizers of $f$ will be denoted $\mi(f).$ A \emph{resolvent} of the function~$f$ is defined by
\begin{equation} \label{eq:res}
J_\lambda(x)\as\argmin_{y\in \hs}\l[f(y)+\frac{1}{2\lambda}d(x,y)^2\r],
\end{equation}
for every $x\in \hs,$ and parameter $\lambda>0.$ If $f$ is convex and lsc, then $J_\lambda\map$ is a~well-defined nonexpansive mapping~\cite[Lemma 2.5]{jost-ch}, and \cite[Lemma 1.12]{mayer}.

Let us now state the following result from~\cite[Lemma 2.2]{gelander}, which then implies the existence of a minimizer of a coercive function in Lemma~\ref{lem:coercive} below.
\begin{lem} \label{lem:gelander}
 Let $(\hs,d)$ be a Hadamard space and $\l(C_n\r)$ be a nonincreasing sequence of bounded closed convex subsets of $\hs.$ Then
\begin{equation*} \bigcap_{n\in\nat} C_n \neq\emptyset.\end{equation*}
\end{lem}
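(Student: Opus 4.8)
The plan is to construct a point in $\bigcap_{n} C_n$ by chasing the metric projections of a fixed base point onto the sets $C_n$. Fix any $x_0\in C_1$ and put $p_n\as P_{C_n}(x_0)$; this is well defined since each $C_n$ is closed and convex. As $(C_n)$ is nonincreasing, $d(x_0,C_{n+1})\ge d(x_0,C_n)$, so the numbers $t_n\as d(x_0,p_n)$ form a nondecreasing sequence, and they are bounded because $p_n\in C_1$ with $C_1$ bounded. Hence $t_n\to L$ for some $L\in[0,\infty)$. I claim $(p_n)$ is Cauchy; granting this, its limit $p$ exists by completeness of $\hs,$ and since for each fixed $n$ we have $p_m\in C_n$ whenever $m\ge n$ and $C_n$ is closed, it follows that $p\in C_n$ for every $n,$ which is the assertion.

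The one geometric input we need is the obtuse-angle inequality for metric projections: if $C\subset\hs$ is closed and convex, $x\in\hs,$ $p\as P_C(x),$ and $y\in C,$ then
\begin{equation*}
 d(x,y)^2\ge d(x,p)^2+d(p,y)^2 .
\end{equation*}
This is immediate from the CAT(0) inequality~\eqref{eq:cat}. Indeed, let $\gamma\geo$ be the geodesic from $p$ to $y$; it lies in $C$ by convexity, so $d(x,\gamma(t))\ge d(x,p)$ for all $t\in[0,1].$ Plugging $\gamma(0)=p,\ \gamma(1)=y$ into~\eqref{eq:cat} and using this lower bound gives $t\,d(x,p)^2\le t\,d(x,y)^2-t(1-t)\,d(p,y)^2$; dividing by $t>0$ and letting $t\to0^+$ yields the claim.

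It remains to verify the Cauchy property. Let $m\ge n.$ Then $p_m\in C_m\subset C_n,$ so applying the inequality above with $C=C_n,$ $x=x_0,$ $p=p_n,$ $y=p_m$ gives $d(x_0,p_m)^2\ge d(x_0,p_n)^2+d(p_n,p_m)^2,$ that is,
\begin{equation*}
 d(p_n,p_m)^2\le t_m^2-t_n^2\le L^2-t_n^2 ,
\end{equation*}
and the right-hand side tends to $0$ as $n\to\infty,$ uniformly in $m\ge n.$ Hence $(p_n)$ is Cauchy and the proof concludes as explained above. The substantive step is the projection inequality; once it is in hand, the rest is bookkeeping. (One could instead appeal to the existence and uniqueness of the circumcenter of a bounded set, but the projection argument is more elementary and stays within the tools already introduced.)
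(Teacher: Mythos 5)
Your proof is correct. Note that the paper itself does not prove this lemma: it simply cites~\cite[Lemma 2.2]{gelander}, so any self-contained argument is already ``a different route.'' What you give is the standard projection-chasing proof: the obtuse-angle (variational) inequality $d(x,y)^2\ge d(x,P_C x)^2+d(P_C x,y)^2$ is derived correctly from~\eqref{eq:cat} by sliding along the geodesic $[P_C x,y]$ inside $C$ and letting $t\to0^+$, the monotone bounded sequence $t_n=d(x_0,p_n)$ converges, and the Pythagorean estimate $d(p_n,p_m)^2\le t_m^2-t_n^2$ makes $(p_n)$ Cauchy; completeness and closedness of each $C_n$ finish it. This uses only tools already set up in the paper (the metric projection is introduced just before the lemma, and its existence does not depend on the lemma, so there is no circularity). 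Two minor remarks. First, the statement implicitly requires each $C_n$ to be nonempty (otherwise it is false), and your argument uses this when forming $P_{C_n}(x_0)$; it would be worth saying so explicitly. Second, your parenthetical is right that one could instead use circumcenters (or asymptotic centers), which is the route some references take; your version is more elementary and, as you say, stays within what the paper has already introduced, so it is arguably preferable here.
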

\begin{proof}
 See \cite[Lemma 2.2]{gelander}.
\end{proof}
As a consequence, we obtain the following Lemma~\ref{lem:coercive}. Just recall that a function $f\fun$ is \emph{coercive} if it satisfies $f(x)\to\infty$ whenever $d\l(x,x_0\r)\to\infty,$ for some $x_0\in \hs.$
\begin{lem} \label{lem:coercive}
Let $(\hs,d)$ be a Hadamard space and $f\fun$ be a~coercive convex lsc function. Then $f$ has a minimizer.
\end{lem}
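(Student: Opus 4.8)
The plan is to combine coercivity with the sublevel-set machinery provided by Lemma~\ref{lem:gelander}. First I would fix a point $x_0\in\hs$ and observe that, since $f$ is proper (implicitly, as a coercive function with values in $\exrls$ it is not identically $\infty$; pick $x_1$ with $f(x_1)<\infty$), the infimum $m\as\inf_{x\in\hs}f(x)$ is finite or $-\infty$; in either case choose a strictly decreasing sequence $\alpha_k\downarrow m$ with $\alpha_1\geq f(x_1)$, and consider the sublevel sets
\begin{equation*}
C_k\as\l\{x\in\hs : f(x)\leq\alpha_k\r\},\qquad k\in\nat.
\end{equation*}
These are nonempty by the choice of $\alpha_k$, nested decreasingly since $\alpha_k$ is nonincreasing, closed because $f$ is lsc, and convex because $f$ is convex (a sublevel set of a convex function on a geodesic space is convex, using the definition of convexity via $f\circ\gamma$). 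The one remaining property needed to invoke Lemma~\ref{lem:gelander} is boundedness.

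The key step is boundedness of the $C_k$, and this is exactly where coercivity enters. I claim each $C_k$ is bounded; in fact I will show $C_1$ is bounded, which suffices since $C_k\subseteq C_1$. Suppose not: then there is a sequence $(y_j)\subseteq C_1$ with $d(y_j,x_0)\to\infty$. By coercivity $f(y_j)\to\infty$, contradicting $f(y_j)\leq\alpha_1$. Hence $C_1$, and a fortiori every $C_k$, is bounded.

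Now Lemma~\ref{lem:gelander} applies to $(C_k)_{k\in\nat}$ and yields a point $z\in\bigcap_{k\in\nat}C_k$. For this $z$ we have $f(z)\leq\alpha_k$ for every $k$, hence $f(z)\leq\inf_k\alpha_k=m=\inf_{x\in\hs}f(x)$, so $f(z)=m$ and $z\in\mi(f)$. (In particular $m>-\infty$, since $f(z)\in\exrls$ is a genuine value, though we do not need this.) I do not expect any serious obstacle here: the only point requiring a little care is confirming that sublevel sets of a convex lsc function on a Hadamard space are closed and convex, which is immediate from the definitions recalled in Section~\ref{sec:preli}, and the reduction of boundedness of all $C_k$ to that of a single one. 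Everything else is a direct application of Lemma~\ref{lem:gelander}.
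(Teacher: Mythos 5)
Your proof is correct and follows essentially the same route as the paper: coercivity gives bounded sublevel sets, and Lemma~\ref{lem:gelander} applied to the nested family produces a point realizing the infimum. The only (harmless) organizational difference is that the paper first rules out $\inf_\hs f=-\infty$ by a separate application of Lemma~\ref{lem:gelander} to the sets $\l\{x\in C\col f(x)\leq -N\r\}$ before intersecting the sublevel sets at levels $\inf_\hs f+\frac1n$, whereas you fold both steps into a single application by letting $\alpha_k\downarrow m$ even when $m=-\infty$ and using that $f$ takes values in $\exrls$ to exclude that case a posteriori.
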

\begin{proof}
We will first observe that $f$ is bounded from below on bounded sets. Let $C\subset \hs$ be bounded, and without loss of generality assume that $C$ is closed convex. If $\inf_C f=-\infty,$ then the sets $S_N=\l\{ x\in C\col f(x)\leq -N\r\}$ for $N\in\nat$ are all nonempty, closed, convex, and bounded. But then Lemma~\ref{lem:gelander} yields a point $z\in \bigcap_{N\in\nat}S_N.$ Clearly $f(z)=-\infty,$ which is not possible.

Since $f$ is bounded from below on bounded sets, it is bounded from below on $\hs,$ by the coercivity assumption. Therefore $\inf_\hs f>-\infty,$ and the sublevel sets
\begin{equation*} C_n\as\l\{x\in \hs\col f(x)\leq \inf_\hs f + \frac1n \r\},\end{equation*}
form a nonincreasing sequence of nonempty, bounded, closed, convex subsets of $\hs.$ Such a family has according to Lemma~\ref{lem:gelander} nonempty intersection and each point in this intersection is obviously a minimizer of~$f.$
\end{proof}

\subsection*{Means.} \label{subsec:means}
Given a~finite set of points $a_1,\dots,a_N\in\hs,$ recall that the (weighted) Fr\'echet mean with positive weights $w_1,\dots,w_N$ satisfying $\sum w_n=1,$ was in~\eqref{eq:mean} defined as
\begin{equation} \label{eq:mean2}
\Xi\as\Xi\l(\ol{w};\ol{a}\r)\as\argmin_{x\in\hs} \sum_{n=1}^N w_n d\l(x,a_n\r)^2,
\end{equation}
where again we denote $\ol{w}\as \l(w_1,\dots,w_N\r)$ and $\ol{a}\as \l(a_1,\dots,a_N\r).$
Some authors alternatively use the name Karcher mean. The existence and uniqueness of the minimizer in the definition is a consequence of nonpositive curvature. It is guaranteed by the following theorem, which is a combination of~\cite[Theorem 3.2.1]{jost2}, \cite[Proposition~4.4]{sturm-conm}, and \cite[Lemma 4.2]{lang-gafa}.
\begin{thm}\label{thm:average}
Let $(\hs,d)$ be a Hadamard space, let $a_1,\dots,a_N\in\hs$ be a finite set of points, and $w_1,\dots,w_N$ be positive weights satisfying $\sum w_n=1.$ Then there exists a unique point $\Xi\in\hs$ defined in~\eqref{eq:mean2}.
Furthermore, this $\Xi$ satisfies the variance inequality
\begin{equation} \label{eq:varineq}
d\l(z,\Xi\r)^2+\sum_{n=1}^N  w_n d\l(\Xi,a_n\r)^2 \leq \sum_{n=1}^N  w_n d\l(z,a_n\r)^2,
\end{equation}
for each $z\in\hs.$
Finally, the function $\Xi(\ol{w};\cdot)$ satisfies
\begin{equation*}d\l(\Xi\l(\ol{w};\ol{a}\r),\Xi\l(\ol{w};\ol{a}'\r) \r) \leq \sum_{n=1}^N w_n d\l(a_n,a_n'\r),\end{equation*}
for every $a_1,\dots,a_N\in\hs,$ and $a_1',\dots,a_N'\in\hs.$
\end{thm}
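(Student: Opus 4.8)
The plan is to handle the theorem's three assertions in order, the variance inequality being the pivot from which uniqueness and the stability estimate follow cheaply. First I would record that each map $x\mapsto d(x,a_n)^2$ is convex and continuous, as one reads off directly from the CAT(0) inequality~\eqref{eq:cat} with $z$ replaced by $a_n$; hence $f\as\sum_{n=1}^N w_n d(\cdot,a_n)^2$ is convex and continuous, in particular lsc. It is moreover coercive, since taking $x_0\as a_1$ gives $f(x)\geq w_1\, d(x,a_1)^2\to\infty$ as $d(x,x_0)\to\infty$. Lemma~\ref{lem:coercive} then provides a minimizer, which we name $\Xi$.

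For the variance inequality, fix $z\in\hs$ and let $\gamma\geo$ be the geodesic with $\gamma(0)=\Xi$ and $\gamma(1)=z$. Applying~\eqref{eq:cat} with $z$ replaced by $a_n$, multiplying by $w_n$, and summing over $n$ yields, for every $t\in[0,1]$,
\[
 f\l(\gamma(t)\r)\leq (1-t)\,f(\Xi)+t\sum_{n=1}^N w_n\, d(z,a_n)^2-t(1-t)\,d(\Xi,z)^2 .
\]
Since $\Xi$ minimizes $f$, the left-hand side is at least $f(\Xi)$; subtracting $(1-t)f(\Xi)$, dividing by $t>0$, and letting $t\to 0^{+}$ produces exactly~\eqref{eq:varineq}. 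Applying~\eqref{eq:varineq} to any competing minimizer $z$ then forces $d(z,\Xi)=0$, so $\Xi$ is unique (alternatively one may invoke the strict convexity of $f$ along nonconstant geodesics, again visible in~\eqref{eq:cat}, together with the usual midpoint argument).

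For the stability estimate, write $\Xi\as\Xi\l(\ol{w};\ol{a}\r)$ and $\Xi'\as\Xi\l(\ol{w};\ol{a}'\r)$. Using~\eqref{eq:varineq} once for $\Xi$ with $z=\Xi'$ and the points $a_n$, once for $\Xi'$ with $z=\Xi$ and the points $a_n'$, and adding the two inequalities gives
\[
 2\,d(\Xi,\Xi')^2\leq\sum_{n=1}^N w_n\l[\,d(\Xi',a_n)^2+d(\Xi,a_n')^2-d(\Xi,a_n)^2-d(\Xi',a_n')^2\,\r].
\]
For each $n$, Reshetnyak's inequality (Lemma~\ref{lem:resh}) applied to the points $x=\Xi'$, $y=a_n$, $u=\Xi$, $v=a_n'$ bounds the $n$-th bracket above by $2\,d(\Xi,\Xi')\,d(a_n,a_n')$. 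Summing and then dividing by $2\,d(\Xi,\Xi')$ (the case $d(\Xi,\Xi')=0$ being trivial) yields the asserted bound.

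The one genuinely nontrivial step is the variance inequality. The CAT(0) hypothesis is a second-order (comparison) condition, so the work lies in extracting a first-order consequence of minimality from it; the passage $t\to 0^{+}$ along the geodesic from $\Xi$ to $z$ is precisely what converts the comparison estimate into the sharp additive inequality~\eqref{eq:varineq}. Everything afterwards — uniqueness and the Lipschitz-type dependence on the data — is routine, the latter resting on the Reshetnyak inequality.
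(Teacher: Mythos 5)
Your proof is correct, and for the two substantive parts --- the variance inequality and the Lipschitz dependence on the data --- it coincides with the paper's argument step for step: the same passage $t\to 0^{+}$ along the geodesic from $\Xi$ to $z$ in \eqref{eq:cat}, and the same combination of \eqref{eq:varineq} applied twice with Reshetnyak's inequality (Lemma~\ref{lem:resh}). The only genuine divergence is the existence step. The paper argues directly: it takes a minimizing sequence, applies \eqref{eq:cat} at midpoints to show the sequence is Cauchy, and passes to the limit by continuity; uniqueness is then read off from the same strict-convexity estimate. You instead observe that $f$ is convex, continuous and coercive and invoke Lemma~\ref{lem:coercive}, which in turn rests on the nested-convex-sets Lemma~\ref{lem:gelander}. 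Both are legitimate (Lemma~\ref{lem:coercive} precedes Theorem~\ref{thm:average} in the paper, so there is no circularity), but they buy slightly different things: the paper's direct argument is self-contained and shows quantitatively that \emph{every} minimizing sequence converges, which already encodes uniqueness; your softer route outsources compactness to the intersection lemma and then recovers uniqueness elegantly from the variance inequality itself --- a clean observation the paper does not make explicit, since it attributes uniqueness to \eqref{eq:cat} instead. Your fallback remark about strict convexity and the midpoint argument is exactly the paper's route.
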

\begin{proof} We are to show that there exists a unique minimizer of the function
\begin{equation*} \phi\col x\mapsto \sum_{n=1}^N  w_n d\l(x,a_n\r)^2,\quad y\in\hs.\end{equation*}
The function $\phi$ is bounded from below by $0.$ Take a minimizing sequence $\l(y_k\r)\subset\hs,$ that is, a sequence such that $\phi\l(y_k\r)\to\inf\phi.$ The inequality~\eqref{eq:cat} yields that $\l(y_k\r)$ is Cauchy. Indeed, if $y_{kl}$ denotes the midpoint of $y_k$ and $y_l,$ then~\eqref{eq:cat} with $t=\frac12$ gives
\begin{equation*} d\l(y_{kl},a_n\r)^2\leq\frac12 d\l(y_k,a_n\r)^2+\frac12d\l(y_l,a_n\r)^2-\frac14d\l(y_k,y_l\r)^2.\end{equation*}
Multiplying this inequality by $w_n$ and summing from $n=1$ to $N$  easily gives that the sequence $\l(y_k\r)$ is Cauchy. Since $\phi$ is continuous, the sequence $\l(y_k\r)$ converges to a minimizer of $\phi.$ The uniqueness of this minimizer follows again from~\eqref{eq:cat}. It remains to show~\eqref{eq:varineq}.
Employing~\eqref{eq:cat} yields
\begin{align*}
\sum_{n=1}^N w_n d\l(\gamma(t),a_n\r)^2-\sum_{n=1}^N w_n d\l(\Xi,a_n\r)^2 &\leq (1-t)\sum_{n=1}^N w_n\l[d\l(\gamma(0),a_n\r)^2- d\l(\Xi,a_n\r)^2\r] \\ & \quad +t \sum_{n=1}^N w_n \l[d\l(\gamma(1),a_n\r)^2 - d\l(\Xi,a_n\r)^2\r] \\ & \quad-t(1-t)d\l(\gamma(0),\gamma(1)\r)^2,
\end{align*}
for each geodesic $\gamma\geo.$ Setting $\gamma(0)=\Xi$ and $\gamma(1)=z$ gives
\begin{align*}
0 & \leq \sum_{n=1}^N w_n d\l(\gamma(t),a_n\r)^2-\sum_{n=1}^N w_n d\l(\Xi,a_n\r)^2 \\
 & \leq t\l[\sum_{n=1}^N w_n d\l(z,a_n\r)^2-\sum_{n=1}^N w_n d\l(\Xi,a_n\r)^2\r]-t(1-t)d\l(\Xi,z\r)^2
\end{align*}
for each $t\in(0,1).$ Dividing by $t$ and letting $t\to0$ yields~\eqref{eq:varineq}. 

If we denote $\Xi=\Xi\l(\ol{w};\ol{a}\r)$ and $\Xi'=\Xi\l(\ol{w};\ol{a}'\r),$ then Lemma~\ref{lem:resh} yields
\begin{equation*} d\l(a_n,\Xi'\r)^2 + d\l(a_n',\Xi\r)^2 \leq d\l(a_n,\Xi\r)^2 + d\l(a_n',\Xi'\r)^2 +2 d\l(\Xi,\Xi'\r)d\l(a_n,a_n'\r),\end{equation*}
multiplying by $w_n$ and summing up over $n$ from $1$ to $N$ further gives
\begin{align*}
\sum_{n=1}^N w_n \l[ d\l(a_n,\Xi'\r)^2 + d\l(a_n',\Xi\r)^2 \r] &\leq \sum_{n=1}^N w_n \l[ d\l(a_n,\Xi\r)^2 + d\l(a_n',\Xi'\r)^2 \r] \\ & \quad + 2d\l(\Xi,\Xi'\r) \sum_{n=1}^N w_n d\l(a_n,a_n'\r) . 
\end{align*}
By the variance inequality~\eqref{eq:varineq} we have
\begin{align*}
\sum_{n=1}^N w_n \l[ d\l(a_n,\Xi'\r)^2 + d\l(a_n',\Xi\r)^2 \r]  &\geq \sum_{n=1}^N w_n \l[ d\l(a_n,\Xi\r)^2 + d\l(a_n',\Xi'\r)^2 \r] \\ & \quad + 2d\l(\Xi,\Xi'\r)^2. 
\end{align*}
Altogether we obtain
\begin{equation*}d\l(\Xi,\Xi' \r) \leq \sum_{n=1}^N w_n d\l(a_n,a_n'\r),\end{equation*}
which finishes the proof.
\end{proof}

If $a_1,\dots,a_N\in \rls^d,$ then of course
\begin{equation} \label{eq:arithmetic}
 \Xi\l(\ol{w};\ol{a}\r)  = w_1 a_1+\dots+w_N a_N.
\end{equation}

In other words, the Fr\'echet mean coincides with the usual (weighted) arithmetic mean. 

\subsection*{Medians.}
The (weighted) geometric median of a finite set of points $a_1,\dots,a_N\in \hs$ was in~\eqref{eq:median} defined as
\begin{equation} \label{eq:median2}
 \Psi\l(\ol{w};\ol{a}\r)\as\argmin_{x\in \hs} \sum_{n=1}^N w_n d\l(x,a_n\r). 
\end{equation}
Since the function
\begin{equation*} x\mapsto \sum_{n=1}^N w_n d\l(x,a_n\r)\end{equation*}
is convex continuous  and coercive, it has a minimizer due to Lemma~\ref{lem:coercive}. Unlike means, medians are not unique: the set $\Psi\l(\ol{w};\ol{a}\r)$ may contain more than one point in general. As we have already mentioned in the Introduction, a median is an optimal solution to the Fermat-Weber problem in facility location theory.

\subsection*{Supermartingale convergence theorem.} \label{subsec:martingale}
The main results of the present paper rely upon the following form of the supermartingale convergence theorem from \cite[Proposition~4.2]{bert-tsi}, or its deterministic variant, respectively.
\begin{thm} \label{thm:martingale}
Let $\l(\Omega,\cf,\l(\cf_k\r)_{k\in\nato},\mu\r)$ be a filtered probability space. Assume $\l(Y_k\r),\l(Z_k\r)$ and $\l(W_k\r)$ are sequences of nonnegative real-valued random variables defined on $\Omega$ and assume that
\begin{enumerate}
 \item $Y_k,Z_k,W_k$ are~$\cf_k$-measurable for each $k\in\nato,$
 \item $ \expe \l(Y_{k+1}\big|\cf_k\r) \leq Y_k-Z_k+W_k,$ for each $k\in\nato,$ \label{i:mar:ii}
 \item $\sum_k W_k<\infty.$
\end{enumerate}
Then the sequence $\l(Y_k\r)$ converges to a finite random variable $Y$ almost surely, and $\sum_k Z_k<\infty,$ almost surely.
\end{thm}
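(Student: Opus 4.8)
The plan is to recognise the statement as the Robbins--Siegmund almost sure convergence lemma and to prove it by absorbing the drift $W_k-Z_k$ into the process so as to obtain a genuine supermartingale, to which Doob's martingale convergence theorem then applies.

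First I would fix the almost sure event $\l\{\sum_k W_k<\infty\r\}$ furnished by~(iii) and work on it throughout. On this event set $U_k\as Y_k+\sum_{j=0}^{k-1}\l(Z_j-W_j\r)$ for $k\in\nato$. By~(i) each $U_k$ is $\cf_k$-measurable, and a one-line computation from~(ii),
\[ \expe\l(U_{k+1}\,\big|\,\cf_k\r)=U_k+\expe\l(Y_{k+1}\,\big|\,\cf_k\r)-Y_k+Z_k-W_k\le U_k, \]
shows that $\l(U_k\r)_k$ is a supermartingale with respect to $\l(\cf_k\r)_k$; since $Z_j,W_j\ge0$ it satisfies $U_k\ge-\sum_j W_j$, a finite lower bound on the event we have fixed. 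Granting for the moment that Doob's theorem applies, $U_k$ converges almost surely to a finite limit. As $\sum_{j=0}^{k-1}W_j$ converges, this forces $Y_k+\sum_{j=0}^{k-1}Z_j$ to converge almost surely to a finite limit; the partial sums $\sum_{j=0}^{k-1}Z_j$ are nondecreasing (because $Z_j\ge0$) and, since $Y_k\ge0$, dominated by this convergent sequence, so $\sum_k Z_k<\infty$ almost surely; and then $Y_k=\l(Y_k+\sum_{j=0}^{k-1}Z_j\r)-\sum_{j=0}^{k-1}Z_j$ converges almost surely, being a difference of two convergent sequences. This yields both conclusions.

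The main obstacle is integrability: the hypotheses do not guarantee that the $Y_k$, hence the $U_k$, belong to $L^1$, so Doob's theorem cannot be quoted verbatim. I would handle this by the usual localisation — introduce stopping times such as $\nu_m\as\inf\l\{k\in\nato\col\sum_{j=0}^{k}W_j>m\r\}$, which increase to $\infty$ on $\l\{\sum_k W_k<\infty\r\}$, carry out the computation above for the stopped and truncated processes built from $Y_k\wedge m$ (for which integrability and the supermartingale property are immediate, and which, being nonnegative, converge almost surely with no integrability assumption), and then let $m\to\infty$ to transfer the conclusion to $\l(U_k\r)_k$ itself. Everything apart from this step is routine bookkeeping with nonnegative quantities and the inequality~(ii); the deterministic variant is the special case in which all $\cf_k$ are trivial, where no measure theory is needed at all.
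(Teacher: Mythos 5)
The paper does not actually prove Theorem~\ref{thm:martingale}: its ``proof'' is a pointer to \cite{bert-tsi} and to a forthcoming book, so there is no argument to compare yours against. What you supply is the standard Robbins--Siegmund argument, and its core is correct: the compensated process $U_k\as Y_k+\sum_{j=0}^{k-1}(Z_j-W_j)$ is adapted by~(i), your one-line computation from~(ii) gives $\expe(U_{k+1}\,|\,\cf_k)\le U_k$, and, once $U_k$ is known to converge almost surely to a finite limit, adding back the convergent series $\sum_{j<k}W_j$, using $Y_k\ge0$ to bound the nondecreasing partial sums $\sum_{j<k}Z_j$, and subtracting again does deliver both conclusions exactly as you say.

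The only step that needs repair is the localisation. Stopping at $\nu_m\as\inf\{k\col\sum_{j=0}^{k}W_j>m\}$ is the right device: it forces $U_{k\wedge\nu_m}\ge-m$, so $U_{k\wedge\nu_m}+m$ is nonnegative. But the additional truncation ``built from $Y_k\wedge m$'' does not have an ``immediate'' supermartingale property: on the $\cf_k$-event $\{Y_k>m\}$ one only gets $\expe(Y_{k+1}\wedge m\,|\,\cf_k)\le m=Y_k\wedge m$, so the $-Z_k$ term is lost there; consequently the summability of $\sum_k Z_k$ cannot be recovered by letting $m\to\infty$, and convergence of $Y_k\wedge m$ for every $m$ does not by itself rule out $Y_k\to\infty$. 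The standard and cheaper fix for the missing integrability is to restrict to the event $A_c\as\{Y_0\le c\}\in\cf_0$: since $\mathbf{1}_{A_c}$ commutes with $\expe(\cdot\,|\,\cf_k)$, the process $\mathbf{1}_{A_c}\bigl(U_{k\wedge\nu_m}+m\bigr)$ is a genuine nonnegative supermartingale with expectation at most $c+m$, hence integrable, and Doob's theorem applies to it; letting $c\to\infty$ and then $m\to\infty$ (using $\nu_m\uparrow\infty$ on $\{\sum_kW_k<\infty\}$) transfers almost sure convergence to $U_k$ itself, after which your concluding bookkeeping goes through verbatim. With that substitution the proof is complete; note that the deterministic variant, Lemma~\ref{lem:determinmartingale}, is the one the paper does prove, by a direct $\limsup$/$\liminf$ argument that needs none of this machinery.
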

\begin{proof}
 The proof is now scattered in the literature; see \cite[Proposition~4.2]{bert-tsi}. It is going to appear in a complete and unified form in the forthcoming book~\cite{mybook}.
\end{proof}

A deterministic version of the above theorem will be used in the proof of Theorem~\ref{thm:cyclic}. We include its proof from~\cite[Lemma 3.4]{bert-tsi} for the reader's convenience.
\begin{lem} \label{lem:determinmartingale}
Let $\l(a_k\r),\l(b_k\r)$ and  $\l(c_k\r)$ be sequences of nonnegative real numbers. Assume that
\begin{align}
a_{k+1} & \leq a_k -b_k +c_k, \label{eq:determin} \\
\intertext{for each $k\in\nat,$ and,}
\sum_{k=1}^\infty c_k & < \infty. \nonumber
\end{align}
Then the sequence $\l(a_k\r)$ converges and $\sum_{k=1}^\infty b_k < \infty.$
\end{lem}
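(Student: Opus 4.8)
The plan is to prove the deterministic statement by a direct telescoping argument, which is the standard route and requires no probabilistic machinery despite the lemma being presented as a companion to the supermartingale theorem.

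First I would fix $k\in\nat$ and sum the inequality~\eqref{eq:determin} over the index range from $1$ to $k$. After cancellation of the telescoping terms $a_j$ on both sides, this yields
\begin{equation*}
a_{k+1} + \sum_{j=1}^k b_j \leq a_1 + \sum_{j=1}^k c_j \leq a_1 + \sum_{j=1}^\infty c_j.
\end{equation*}
The right-hand side is a finite constant, call it $M$, independent of $k$, by the summability hypothesis on $\l(c_k\r)$. Since all $a_{k+1}\geq 0$ and all $b_j\geq0$, we immediately read off two facts: the partial sums $\sum_{j=1}^k b_j$ are bounded above by $M$, and $a_{k+1}\leq M$ for every $k$. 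The first fact, together with nonnegativity of the $b_j$, says the series $\sum_{k=1}^\infty b_k$ has bounded monotone partial sums, hence converges; in particular $\sum_{k=1}^\infty b_k<\infty$.

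It remains to show $\l(a_k\r)$ converges. Here I would introduce the auxiliary sequence $\alpha_k\as a_k + \sum_{j=k}^\infty c_j$, which is well-defined and finite since the tail of a convergent series is finite. The point of this shift is that $\alpha_k$ is nonincreasing: indeed,
\begin{equation*}
\alpha_{k+1} = a_{k+1} + \sum_{j=k+1}^\infty c_j \leq \l(a_k - b_k + c_k\r) + \sum_{j=k+1}^\infty c_j = a_k - b_k + \sum_{j=k}^\infty c_j \leq \alpha_k,
\end{equation*}
using $b_k\geq0$ in the last step. A nonincreasing sequence bounded below by $0$ converges, so $\alpha_k\to\alpha$ for some $\alpha\geq0$. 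Since $\sum_{j=k}^\infty c_j\to0$ as $k\to\infty$ (tail of a convergent series), we conclude $a_k = \alpha_k - \sum_{j=k}^\infty c_j \to \alpha$. This proves convergence of $\l(a_k\r)$ and completes the argument.

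There is essentially no obstacle here: the whole proof is two telescoping computations plus the monotone convergence criterion for real sequences. The only mild subtlety worth stating carefully is that one must pass to the shifted sequence $\alpha_k$ (rather than working with $a_k$ directly) to get monotonicity, since $\l(a_k\r)$ itself need not be monotone when the $c_k$ are present; this is a routine trick and costs nothing.
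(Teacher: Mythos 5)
Your proof is correct. The second half of your argument --- summing the recursion from $1$ to $k$, telescoping, and using nonnegativity of $a_{k+1}$ to bound the partial sums of $\sum_j b_j$ --- is exactly the paper's argument for $\sum_{k=1}^\infty b_k<\infty$. For the convergence of $(a_k)$ the two proofs differ only in packaging: the paper telescopes from an arbitrary index $l$ to get $\limsup_{k\to\infty}a_k\leq a_l+\sum_{j=l}^{\infty}c_j$ and then takes $\liminf_{l\to\infty}$ to obtain the sandwich $\limsup_{k}a_k\leq\liminf_{l}a_l$, whereas you absorb the same tail sum into the auxiliary sequence $\alpha_k=a_k+\sum_{j\geq k}c_j$, observe that it is nonincreasing and bounded below by $0$, and subtract the vanishing tail at the end. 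Both rest on the identical estimate $a_k\leq a_l+\sum_{j=l}^{k-1}c_j$. Your version has the minor advantage of exhibiting an explicit monotone quantity (indeed $\alpha_{k+1}\leq\alpha_k-b_k$, which would also re-derive the summability of $(b_k)$ for free), while the paper's version avoids introducing any new sequence. There is no gap in your argument.
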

\begin{proof}
Fix $l\in\nat.$ Sum~\eqref{eq:determin} over $k\geq l$ and take $\limsup_{k\to\infty}$ to obtain
\begin{equation*} \limsup_{k\to\infty} a_k\leq a_l + \sum_{k=l}^\infty c_k.\end{equation*}
Taking $\liminf_{l\to\infty}$ yields
\begin{equation*}  \limsup_{k\to\infty} a_k \leq  \liminf_{l\to\infty} a_l,\end{equation*}
and hence $\l(a_k\r)$ converges. Now fix $n\in\nat$ and sum~\eqref{eq:determin} from $k=1$ to $k=n,$
\begin{equation*}\sum_{k=1}^n b_k\leq a_1+\sum_{k=1}^n c_k - a_{n+1}.\end{equation*}
Since the last inequality holds for each $n\in\nat,$ we get $\sum_{k=1}^\infty b_k < \infty.$
\end{proof}

\section{The proximal point algorithm} \label{sec:ppa}

The proximal point algorithm (PPA) is a method for finding a minimizer of a~convex lsc function defined on a Euclidean space. Its origins go back to Martinet~\cite{martinet}, Rockafellar~\cite{rocka} and Br\'ezis\&Lions \cite{brezis}. Quite recently, this algorithm was extended into Riemannian manifolds of nonpositive sectional curvature~\cite{sevilla}, and later also into Hadamard spaces~\cite{ppa}. We recall the main result of~\cite{ppa} in Theorem~\ref{thm:ppa} below.

Let $(\hs,d)$ be a Hadamard space and $f\fun$ be a lsc convex function. Assume that $f$ has a minimizer, that is, $\mi(f)\neq\emptyset.$ Given a sequence $\l(\lambda_k\r)$ of positive reals, the \emph{proximal point algorithm} starting at a point $x_0\in \hs$ generates at the $k$-th step, $k\in\nat,$ the point 
\begin{equation} \label{eq:ppa}
 x_k\as\argmin_{y\in \hs}\l[f(y)+\frac1{2\lambda_{k-1}}d\l(y,x_{k-1}\r)^2\r].
\end{equation}
In terms of resolvents, we can equivalently express~\eqref{eq:ppa} as
\begin{equation} \label{eq:ppares}
x_k = J_{\lambda_{k-1}} \l(x_{k-1}\r).
\end{equation}
The convergence of the algorithm was established in~\cite[Theorem 1.4]{ppa}.
\begin{thm} \label{thm:ppa}
Let $(\hs,d)$ be a locally compact Hadamard space and $f\fun$ be a~convex lsc function attaining its minimum on $\hs.$ Then, for an arbitrary starting point $x_0\in \hs$ and a sequence of positive reals $\l(\lambda_k\r)$ such that $\sum_0^\infty\lambda_k=\infty,$ the sequence $(x_k)\subset\hs$ defined by \eqref{eq:ppa} converges to a minimizer of~$f.$
\end{thm}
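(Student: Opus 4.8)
The plan is to follow the classical Br\'ezis--Lions scheme, adapted to the CAT(0) setting, with local compactness entering only at the very end. The cornerstone is the \emph{fundamental resolvent inequality}: for the resolvent $J_\lambda$ of a convex lsc function $f$ and arbitrary $x,y\in\hs,$
\[
d\l(J_\lambda x,y\r)^2 + d\l(x,J_\lambda x\r)^2 \le d(x,y)^2 + 2\lambda\l(f(y)-f(J_\lambda x)\r).
\]
First I would derive this by setting $z\as J_\lambda x,$ using that $z$ minimizes $w\mapsto f(w)+\tfrac1{2\lambda}d(x,w)^2,$ evaluating this functional along the geodesic $t\mapsto (1-t)z+ty,$ bounding it from above by convexity of $f$ and the CAT(0) inequality~\eqref{eq:cat}, dividing by $t,$ and letting $t\to 0.$ This is the step I expect to be the technical heart of the argument; everything afterwards is bookkeeping erected on top of it.

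Next I would extract two consequences for the PPA sequence $(x_k)$ defined by $x_k=J_{\lambda_{k-1}}(x_{k-1}).$ Applying the inequality with $x=y=x_{k-1}$ and $z=x_k$ gives $2\,d(x_{k-1},x_k)^2\le 2\lambda_{k-1}\l(f(x_{k-1})-f(x_k)\r),$ so $\l(f(x_k)\r)$ is nonincreasing and converges to some $L\ge\min f.$ Applying it with $y=p$ for any minimizer $p\in\mi(f)$ (so $f(p)-f(x_k)\le 0$) yields the Fej\'er-type estimate
\[
d(x_k,p)^2\le d(x_{k-1},p)^2 - 2\lambda_{k-1}\l(f(x_k)-f(p)\r);
\]
in particular $d(x_k,p)$ is nonincreasing, hence $(x_k)$ is bounded and $d(x_k,p)$ converges for every $p\in\mi(f),$ and telescoping the estimate over $k$ gives $\sum_k\lambda_{k-1}\l(f(x_k)-f(p)\r)<\infty.$ Since $f(x_k)-f(p)$ is nonincreasing and $\sum_k\lambda_k=\infty,$ this forces $f(x_k)\to f(p)=\min f,$ i.e.\ $L=\min f.$

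Finally I would invoke local compactness. A complete, locally compact length space is proper by the Hopf--Rinow theorem, so closed bounded subsets of $\hs$ are compact; thus the bounded sequence $(x_k)$ has a convergent subsequence $x_{k_j}\to\bar x.$ Lower semicontinuity of $f$ gives $f(\bar x)\le\liminf_j f(x_{k_j})=\min f,$ so $\bar x\in\mi(f).$ Applying the Fej\'er estimate with $p=\bar x,$ the sequence $d(x_k,\bar x)$ converges, and since it tends to $0$ along $(k_j),$ it tends to $0,$ so $x_k\to\bar x,$ a minimizer of $f.$ (The local compactness could instead be circumvented by a weak-convergence argument combined with the fact that Fej\'er monotone sequences with a unique weak cluster point in $\mi(f)$ converge, but the proper-space route is the shortest.)
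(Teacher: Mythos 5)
Your proof is correct, and it is essentially the argument the paper relies on: the paper does not prove Theorem~\ref{thm:ppa} itself but cites it from~\cite{ppa}, where the convergence is established by exactly this Br\'ezis--Lions/Fej\'er-monotonicity scheme. Note also that your cornerstone resolvent inequality is precisely the un-neglected form of the estimate derived in the proof of Lemma~\ref{lem:estim}, so your route is fully consistent with the machinery the paper develops for the splitting versions.
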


In the present paper, we consider a function $f\fun$ of the form
\begin{equation} \label{eq:f2}
 f\as\sum_{n=1}^N f_n,
\end{equation}
where $f_n\fun$ are convex lsc, and $N\in\nat.$ In many cases, it is much easier to find the resolvents
\begin{equation}  \label{eq:rescom}
J_\lambda^n(x)\as\argmin_{y\in \hs}\l[f_n(y)+\frac{1}{2\lambda}d(x,y)^2\r]
\end{equation}
of the components $f_n$ than the resolvent of the function $f$ itself. This is true for instance for the median and the mean. Then, instead of applying iteratively the resolvent of $f$ as in~\eqref{eq:ppares}, we will apply the resolvents~\eqref{eq:rescom} of the components $f_n.$ There are essentially two ways of doing that. We either fix an order of the components (that is, a permutation of the numbers $1,\dots,N,$ which without loss of generality may be the identity permutation), and at each cycle we will apply the corresponding resolvents in this fixed order, or alternatively, we will at each step pick a number $r\in\{1,\dots,N\}$ at random, and apply the resolvent of $f_r.$ In either case, we get a sequence converging to a minimizer of $f.$ To be more precise, in the latter situation, we get such a sequence almost surely. For $(\hs,d)$ being the Euclidean space, such results were recently obtained by D.~Bertsekas~\cite{bertsekas}, and we follow his proof strategy.

To prove Theorem~\ref{thm:cyclic} and Lemma~\ref{lem:estimrandom}, we will need the following estimate on the function value at a single PPA step.
\begin{lem} \label{lem:estim} Let $h\fun$ be a convex lsc function on a Hadamard space $(\hs,d),$ and let
\begin{equation*}J_\lambda^h (x)\as\argmin_{z\in \hs}\l[h(z)+\frac{1}{2\lambda}d(x,z)^2\r] \end{equation*}
be its resolvent with parameter $\lambda>0.$ Then
\begin{equation*} h\l(J_\lambda^h (x)\r)-h(y)\leq \frac1{2\lambda} d(x,y)^2 -\frac1{2\lambda} d\l(J_\lambda^h (x),y\r)^2, \end{equation*}
for every $x,y\in \hs.$ 
\end{lem}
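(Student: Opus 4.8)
The key idea is to use the defining (minimizing) property of the resolvent $J_\lambda^h(x)$ together with a convexity estimate along the geodesic from $J_\lambda^h(x)$ to $y$, and then invoke the CAT(0) inequality~\eqref{eq:cat} to control the quadratic term. Write $u\as J_\lambda^h(x)$ for brevity and let $\gamma\geo$ be the geodesic with $\gamma(0)=u$ and $\gamma(1)=y$. Since $u$ is the minimizer of $z\mapsto h(z)+\frac1{2\lambda}d(x,z)^2$, comparing its value at $z=u$ with its value at $z=\gamma(t)$ for $t\in(0,1)$ gives
\begin{equation*}
h(u)+\frac1{2\lambda}d(x,u)^2 \leq h(\gamma(t))+\frac1{2\lambda}d(x,\gamma(t))^2.
\end{equation*}

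**Main steps.** First I would bound $h(\gamma(t))$ from above by convexity of $h$: $h(\gamma(t))\leq (1-t)h(u)+t\,h(y)$. Second, I would bound $d(x,\gamma(t))^2$ from above using~\eqref{eq:cat} applied to the geodesic $\gamma$ with the point $x$:
\begin{equation*}
d(x,\gamma(t))^2 \leq (1-t)d(x,u)^2 + t\,d(x,y)^2 - t(1-t)d(u,y)^2.
\end{equation*}
Substituting both estimates into the displayed inequality, the terms $h(u)$ and $\frac1{2\lambda}d(x,u)^2$ on the right absorb a fraction $(1-t)$ of the corresponding left-hand terms, leaving
\begin{equation*}
t\,h(u) \leq t\,h(y) + \frac{t}{2\lambda}d(x,y)^2 - \frac{t}{2\lambda}d(u,y)^2 - \frac{t(1-t)}{2\lambda}d(x,u)^2.
\end{equation*}
Dividing by $t>0$, discarding the (nonpositive) last term, and rearranging yields exactly
\begin{equation*}
h(u)-h(y)\leq \frac1{2\lambda}d(x,y)^2 - \frac1{2\lambda}d(u,y)^2,
\end{equation*}
which is the claim. (Letting $t\to 1$ is not even needed; the inequality holds for every $t\in(0,1)$ after cancellation.)

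**Expected obstacle.** The computation is essentially routine once the right two estimates are combined; the only point requiring a little care is that $h$ may take the value $+\infty$, so one should first note that $u\in\dom h$ and $y\in\dom h$ may be assumed (if $y\notin\dom h$ the inequality is trivial since the right-hand side is $+\infty$ minus a finite quantity, while $h(u)$ is finite because $u$ minimizes a proper function), so that the algebraic manipulations with $h(u)$, $h(y)$, $h(\gamma(t))$ are all legitimate among finite real numbers. No further subtlety arises; the CAT(0) inequality does all the geometric work.
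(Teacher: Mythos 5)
Your overall strategy is exactly the paper's (the paper runs the same argument with the geodesic parametrized in the opposite direction, $p_t=(1-t)y+tJ_\lambda^h(x)$, and lets $t\to1$), and the ingredients you combine --- the minimizing property of the resolvent, convexity of $h$ along $\gamma$, and the CAT(0) inequality --- are the right ones. However, the substitution step is miscomputed. With $\gamma(0)=u$ and $\gamma(1)=y$, the CAT(0) inequality you correctly state puts the coefficient $-t(1-t)$ on $d(\gamma(0),\gamma(1))^2=d(u,y)^2$, while $d(x,u)^2$ enters with coefficient $(1-t)$ and is absorbed by the term $\frac{1}{2\lambda}d(x,u)^2$ already present on the left, leaving a residue $\frac{t}{2\lambda}d(x,u)^2$ there. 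The correct outcome of the substitution is
\[
t\,h(u)+\frac{t}{2\lambda}d(x,u)^2 \;\le\; t\,h(y)+\frac{t}{2\lambda}d(x,y)^2-\frac{t(1-t)}{2\lambda}d(u,y)^2,
\]
that is, the two negative quadratic terms in your display have their coefficients interchanged.

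Dividing by $t$ now gives $h(u)-h(y)\le \frac{1}{2\lambda}d(x,y)^2-\frac{1}{2\lambda}d(x,u)^2-\frac{1-t}{2\lambda}d(u,y)^2$, so for each fixed $t$ the coefficient of $d(u,y)^2$ is only $\frac{1-t}{2\lambda}$. Your parenthetical claim that no limiting step is needed is therefore false: to obtain the full coefficient $\frac{1}{2\lambda}$ you must let $t\to0^+$ (the mirror image of the paper's $t\to1^-$ in its reversed parametrization). This is a one-line repair --- the inequality holds for every $t\in(0,1)$ and the right-hand side is continuous in $t$, so pass to the limit and then discard the nonpositive term $-\frac{1}{2\lambda}d(x,u)^2$ --- but as written the step ``discard the last term; the inequality holds for every $t$ after cancellation'' does not deliver the stated bound. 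Your remark about the case $h(y)=+\infty$ and the finiteness of $h(u)$ is fine.
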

\begin{proof}
Choose $x,y\in \hs.$ From the definition of $J_\lambda^h (x)$ we have
\begin{equation*} h\l(J_\lambda^h (x)\r)+\frac1{2\lambda} d(J_\lambda^h (x),x)^2\leq  h(p)+\frac1{2\lambda} d(p,x)^2,\end{equation*}
for each $p\in \hs.$ In particular, let $t\in[0,1)$ and $p_t=(1-t)y+tJ_\lambda^h (x),$ then
\begin{equation*}\frac1{2\lambda} d\l(J_\lambda^h (x),x\r)^2-\frac1{2\lambda} d\l(p_t,x\r)^2\leq h(p_t)-h\l(J_\lambda^h (x)\r) .\end{equation*}
Applying \eqref{eq:cat} to the above inequality gives
\begin{align*}
(1-t)\l[ h(y)-h\l(J_\lambda^h (x)\r) \r]\geq & -\frac{1-t}{2\lambda} d\l(y,x\r)^2 \\ & + \frac{1-t}{2\lambda} d\l(J_\lambda^h (x),x\r)^2 \\ &+\frac{t(1-t)}{2\lambda} d\l(J_\lambda^h (x),y\r)^2,
\end{align*}
or, after taking into account that $t\neq1,$
\begin{equation*}h\l(J_\lambda^h (x)\r)-h(y) \leq \frac1{2\lambda} d\l(y,x\r)^2-\frac1{2\lambda} d\l(J_\lambda^h (x),x\r)^2 -\frac{t}{2\lambda} d\l(J_\lambda^h (x),y\r)^2.\end{equation*}
Passing to the limit $t\to1,$ we conclude that
\begin{equation*}h\l(J_\lambda^h (x)\r)-h(y) \leq \frac1{2\lambda} d\l(y,x\r)^2-\frac1{2\lambda} d\l(J_\lambda^h (x),x\r)^2 -\frac1{2\lambda} d\l(J_\lambda^h (x),y\r)^2,\end{equation*}
which (after neglecting the middle term on the right hand side) finishes the proof.
\end{proof}

\subsection*{Cyclic order version.} 
We will now prove the first main result, namely, that the proximal point algorithm with cyclic order of applying the marginal resolvent gives a sequence which converges to a minimizer. Let us first precisely define the procedure.
\begin{defi} \label{def:cyclic}
Consider a function $f$ of the form~\eqref{eq:f2}. Let $\l(\lambda_k\r)$ be a sequence of positive reals satisfying
\begin{equation} \label{eq:stepsize}
\sum_{k=0}^\infty \lambda_k=\infty,\qquad\text{and}\qquad \sum_{k=0}^\infty \lambda_k^2<\infty.
\end{equation}
Let $x_0\in \hs$ be an arbitrary starting point. For each $k\in\nato$ we set
\begin{align*}
x_{kN+1} & \as J_{\lambda_k}^1\l(x_{kN}\r),\\ x_{kN+2} & \as J_{\lambda_k}^2\l(x_{kN+1}\r),\\ \vdots \\ x_{kN+N} &\as J_{\lambda_k}^N\l(x_{kN+N-1}\r),
\end{align*}
where the resolvents are defined by~\eqref{eq:rescom} above.
\end{defi}
Note that the step size parameter $\lambda_k$ is constant throughout each cycle. The convergence of the above algorithm is assured by the following theorem. The assumption~\eqref{eq:li} will be commented on later in Remark~\ref{rem:lips}.
\begin{thm}[Cyclic order version of the PPA] \label{thm:cyclic} 
Let $(\hs,d)$ be a locally compact Hadamard space, and $f\fun$ be of the form~\eqref{eq:f2} with $\mi(f)\neq\emptyset.$ Given a starting point $x_0\in \hs,$ let $\l(x_j\r)$ be the sequence defined in Definition~\ref{def:cyclic}. Assume there exists $L>0$ such that
\begin{subequations} \label{eq:li}
\begin{align} \label{eq:lipscycl}
 f_n\l(x_{kN}\r)-f_n\l(x_{kN+n}\r) &\leq L d\l(x_{kN}, x_{kN+n}\r), \\
 f_n\l(x_{kN+n-1}\r)-f_n\l(x_{kN+n}\r) &\leq L d\l(x_{kN+n-1}, x_{kN+n}\r),  \label{eq:lipscycl2}
\end{align}
\end{subequations}
for every $k\in\nato,$ and $n=1,\dots,N.$ Then $\l(x_j\r)$ converges to a minimizer of $f.$
\end{thm}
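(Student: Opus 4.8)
The plan is to follow Bertsekas' strategy, adapted to the CAT(0) setting via the single-step estimate in Lemma~\ref{lem:estim} and the deterministic supermartingale lemma, Lemma~\ref{lem:determinmartingale}. Fix a minimizer $z\in\mi(f)$. The core of the argument is a ``Fej\'er-type'' inequality bounding $d(x_{(k+1)N},z)^2$ in terms of $d(x_{kN},z)^2$, a negative term involving the suboptimality $f(x_{kN})-\inf_\hs f$, and a summable error term. To obtain it, I would apply Lemma~\ref{lem:estim} with $h=f_n$, $\lambda=\lambda_k$, $x=x_{kN+n-1}$, $y=z$, for $n=1,\dots,N$, giving
\begin{equation*}
f_n\l(x_{kN+n}\r)-f_n(z)\leq\frac1{2\lambda_k}d\l(x_{kN+n-1},z\r)^2-\frac1{2\lambda_k}d\l(x_{kN+n},z\r)^2.
\end{equation*}
Summing over $n=1,\dots,N$ telescopes the right-hand side to $\frac1{2\lambda_k}\l[d(x_{kN},z)^2-d(x_{(k+1)N},z)^2\r]$, while the left-hand side is $\sum_n\l[f_n(x_{kN+n})-f_n(z)\r]$.

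The next step is to compare $\sum_n f_n(x_{kN+n})$ with $f(x_{kN})=\sum_n f_n(x_{kN})$. Here the Lipschitz-type hypotheses~\eqref{eq:li} enter: using~\eqref{eq:lipscycl} we write $f_n(x_{kN})-f_n(x_{kN+n})\leq L\,d(x_{kN},x_{kN+n})$, and the triangle inequality plus~\eqref{eq:lipscycl2} and the nonexpansiveness of the resolvents let me bound each $d(x_{kN},x_{kN+n})$ by a sum of consecutive-step distances $d(x_{kN+j-1},x_{kN+j})$. Each such consecutive distance is in turn controlled: from the defining minimization of $J^j_{\lambda_k}$ one gets $\frac1{2\lambda_k}d(x_{kN+j-1},x_{kN+j})^2\leq f_j(x_{kN+j-1})-f_j(x_{kN+j})\leq L\,d(x_{kN+j-1},x_{kN+j})$, hence $d(x_{kN+j-1},x_{kN+j})\leq 2L\lambda_k$. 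Feeding this back, $f(x_{kN})-\sum_n f_n(x_{kN+n})\leq C L^2\lambda_k$ for a constant $C$ depending only on $N$. Combining with the telescoped inequality yields
\begin{equation*}
d\l(x_{(k+1)N},z\r)^2\leq d\l(x_{kN},z\r)^2-2\lambda_k\l[f\l(x_{kN}\r)-\inf_\hs f\r]+2CL^2\lambda_k^2.
\end{equation*}

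With this in hand I invoke Lemma~\ref{lem:determinmartingale} with $a_k=d(x_{kN},z)^2$, $b_k=2\lambda_k[f(x_{kN})-\inf_\hs f]$, $c_k=2CL^2\lambda_k^2$; since $\sum\lambda_k^2<\infty$ by~\eqref{eq:stepsize}, it follows that $d(x_{kN},z)^2$ converges (so $(x_{kN})$ is bounded) and $\sum_k\lambda_k[f(x_{kN})-\inf_\hs f]<\infty$. Because $\sum_k\lambda_k=\infty$, the latter forces $\liminf_k f(x_{kN})=\inf_\hs f$. By local compactness and boundedness, some subsequence $x_{k_iN}$ converges to a point $x_\infty$, and lower semicontinuity of $f$ gives $f(x_\infty)=\inf_\hs f$, i.e. $x_\infty\in\mi(f)$. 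Finally I rerun the Fej\'er inequality with $z$ replaced by this particular $x_\infty$: then $a_k=d(x_{kN},x_\infty)^2$ converges, and along the subsequence it tends to $0$, so the whole sequence $d(x_{kN},x_\infty)\to0$. Since the intra-cycle displacements are $O(L\lambda_k)\to0$, the full sequence $(x_j)$ converges to $x_\infty\in\mi(f)$, completing the proof.

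I expect the main obstacle to be the bookkeeping in the second paragraph: carefully bounding $d(x_{kN},x_{kN+n})$ — and more generally all the cross-distances needed to pass from $\sum_n f_n(x_{kN+n})$ back to $f(x_{kN})$ — uniformly in $k$ with an error of order $\lambda_k^2$, so that summability is preserved. The CAT(0) inequality~\eqref{eq:cat}, Reshetnyak's inequality (Lemma~\ref{lem:resh}), nonexpansiveness of $J^n_\lambda$, and hypothesis~\eqref{eq:li} are exactly the tools that make these estimates go through; none of the remaining steps should present real difficulty.
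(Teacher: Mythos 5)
Your proposal is correct and follows essentially the same route as the paper's proof: the telescoped application of Lemma~\ref{lem:estim} over one cycle, the per-step bound $d(x_{kN+j-1},x_{kN+j})\leq 2L\lambda_k$ obtained from the defining minimization together with~\eqref{eq:lipscycl2}, the resulting Fej\'er-type inequality with error $O(\lambda_k^2)$, Lemma~\ref{lem:determinmartingale}, and the final two-pass argument (first with an arbitrary minimizer to get boundedness and a minimizing subsequence, then with the cluster point itself). The only cosmetic difference is that you invoke nonexpansiveness of the resolvents where the paper needs only the triangle inequality; nothing in the argument is missing.
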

\begin{proof}
We divide the proof into two steps. Step 1:  We claim that
\begin{equation} \label{eq:estimcyclic}
d\l(x_{kN+N},y\r)^2\leq d\l(x_{kN},y\r)^2 -2\lambda_k\l[f\l(x_{kN}\r)-f(y)\r] + 2\lambda_k^2 L^2 N(N+1),
\end{equation}
for each $y\in \hs.$ Indeed, apply Lemma~\ref{lem:estim} with $h=f_n$ and $x=x_{kN+n-1}$ to obtain
\begin{equation*} d\l(x_{kN+n},y\r)^2\leq d\l(x_{kN+n-1},y\r)^2 -2\lambda_k\l[f_n\l(x_{kN+n}\r)-f_n(y)\r], \end{equation*}
for every $y\in \hs,$ and $n=1,\dots,N.$ By summing up we obtain
\begin{align*}
d\l(x_{kN+N},y\r)^2 & \leq d\l(x_{kN},y\r)^2 -2\lambda_k\sum_{n=1}^N \l[f_n\l(x_{kN+n}\r)-f_n(y)\r], \\  & = d\l(x_{kN},y\r)^2 -2\lambda_k\l[f\l(x_{kN}\r)-f(y)\r] \\ & \quad+2\lambda_k\sum_{n=1}^N \l[f_n\l(x_{kN}\r)-f_n\l(x_{kN+n}\r)\r].
\end{align*}
By assumption~\eqref{eq:lipscycl}, we have
\begin{equation*} f_n\l(x_{kN}\r)-f_n\l(x_{kN+n}\r)\leq L d\l(x_{kN}, x_{kN+n}\r)  ,\end{equation*}
where the right hand side can be further estimated as
\begin{equation*} d\l(x_{kN}, x_{kN+n}\r)\leq d\l(x_{kN}, x_{kN+1}\r) +\dots + d\l(x_{kN+n-1}, x_{kN+n}\r).\end{equation*}
By the definition of the algorithm we have
\begin{equation*}f_m\l(x_{kN+m}\r)+\frac1{2\lambda_k}d\l(x_{kN+m-1},x_{kN+m}\r)^2  \leq f_m\l(x_{kN+m-1}\r) ,\end{equation*}
for every $m=1,\dots,N,$ which then gives
\begin{align} 
d\l(x_{kN+m-1},x_{kN+m}\r) &\leq 2\lambda_k \frac{f_m\l(x_{kN+m-1}\r)-f_m\l(x_{kN+m}\r)}{d\l(x_{kN+m-1},x_{kN+m}\r)} \nonumber\\
& \leq 2 \lambda_k L, \label{eq:mstep}
\end{align}
where we employed assumption~\eqref{eq:lipscycl2}. Hence,
\begin{equation*} f_n\l(x_{kN}\r)-f_n\l(x_{kN+n}\r)\leq 2\lambda_k L^2 n,\end{equation*}
and finally,
\begin{equation*} d\l(x_{kN+N},y\r)^2\leq d\l(x_{kN},y\r)^2 -2\lambda_k\l[f\l(x_{kN}\r)-f(y)\r] + 2\lambda_k^2 L^2 N(N+1),\end{equation*}
which finishes the proof of~\eqref{eq:estimcyclic}.

Step 2: Let now $z\in\mi(f),$ and apply~\eqref{eq:estimcyclic} with $y=z.$ Then
\begin{equation*} d\l(x_{kN+N},z\r)^2\leq d\l(x_{kN},z\r)^2 -2\lambda_k\l[f\l(x_{kN}\r)-f(z)\r] + 2\lambda_k^2 L^2 N(N+1),\end{equation*}
which according to Lemma~\ref{lem:determinmartingale} implies that the sequence
\begin{equation*}\l(d\l(x_{kN},z\r)\r)_{k\in\nato}\end{equation*}
converges, (and in particular, the sequence $\l(x_{kN}\r)$ is bounded), and
\begin{equation} \label{eq:summable}
\sum_{k=0}^\infty \lambda_k\l[f\l(x_{kN}\r)-f(z)\r] <\infty.
\end{equation}
From~\eqref{eq:summable} we immediately obtain that there exists a subsequence $\l(x_{k_lN}\r)$ of $\l(x_{kN}\r)$ for which
\begin{equation*}f\l(x_{k_l N}\r)\to f(z),\quad \text{as } l\to\infty.\end{equation*}
Since the sequence $\l(x_{k_lN}\r)$ is bounded, it has a subsequence which converges to a~point $\hat{z}\in \hs.$ By the lower semicontinuity of~$f$ we obtain $\hat{z}\in\mi(f).$ Then we know that
\begin{equation*}\l(d\l(x_{kN},\hat{z}\r)\r)_{k\in\nato}\end{equation*}
converges, and also that it converges to $0,$ since a subsequence of $\l(x_{kN}\r)$ converges to~$\hat{z}.$

By virtue of \eqref{eq:mstep}, we obtain
\begin{equation*}\lim_{k\to\infty} x_{kN+n}=\hat{z},\end{equation*}
for every $n=1,\dots,N.$ Hence the whole sequence $\l(x_j\r)$ converges to $\hat{z}$ and the proof is complete.
\end{proof}

\subsection*{Random order version.}
Instead of applying the marginal resolvents in a~cyclic order, one can at each step select a number from $\{1,\dots,N\}$ at random and use the corresponding resolvent. Next we prove that the resulting sequence converges to a~minimizer of the function $f,$ too.
\begin{defi} \label{def:random}
Let $f$ and $\l(\lambda_k\r)$ be as in Definition~\ref{def:cyclic}. Let $\l(r_k\r)$ be a sequence of random variables which attain values from $\{1,\dots,N\}$ according to the uniform distribution, independently of previous steps. For every $k\in\nato,$ define
\begin{equation} \label{eq:random}
x_{k+1}\as J_{\lambda_k}^{r_k}\l(x_{k}\r),
\end{equation}
with a starting point $x_0\in \hs.$ Finally, denote $x_{k+1}^n$ the result of the iteration with~$x_k$ if $r_k=n.$ Here we of course consider the underlying probability space $\Omega\as\{1,\dots,N\}^\nato$ to be equipped with the product of the uniform probability measure on $\{1,\dots,N\}.$
\end{defi}

The following Lemma~\ref{lem:estimrandom} shows an (almost) supermartingale property required by Theorem~\ref{thm:martingale}\eqref{i:mar:ii}. Again, the assumption~\eqref{eq:lipsrandom} will be commented on in Remark~\ref{rem:lips}.
\begin{lem} \label{lem:estimrandom}
Let $(\hs,d)$ be a Hadamard space and $f$ be of the form~\eqref{eq:f2}. Given a starting point $x_0\in \hs,$ let $\l(x_k\r)$ be the sequence defined in Definition~\ref{def:random}. Assume there exists $L>0$ such that
\begin{equation} \label{eq:lipsrandom}
 f_n\l(x_k\r)-f_n\l(x_{k+1}^n\r)\leq L d\l( x_k,x_{k+1}^n\r),
\end{equation}
for every $k\in\nato$ and $n=1,\dots,N.$ If we denote $\cf_k\as\sigma\l(x_0,\dots,x_k\r),$ then
\begin{equation*} \expe\l[ d\l(x_{k+1},y\r)^2\big| \cf_k\r] \leq d\l(x_k,y\r)^2-\frac{2\lambda_k}{N}\l[f\l(x_k\r)-f(y)\r]+4\lambda_k^2 L^2,\end{equation*}
almost surely, for each $y\in \hs.$
\end{lem}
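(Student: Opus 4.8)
The plan is to condition on $\cf_k$, average over the uniform choice of $r_k \in \{1,\dots,N\}$, and combine the single-step estimate from Lemma~\ref{lem:estim} with the Lipschitz-type hypothesis~\eqref{eq:lipsrandom} to bound the cross term. First I would fix $y\in\hs$ and apply Lemma~\ref{lem:estim} with $h=f_n$, $x=x_k$, and $\lambda=\lambda_k$: this yields
\begin{equation*}
f_n\l(x_{k+1}^n\r)-f_n(y)\leq \frac{1}{2\lambda_k}d\l(x_k,y\r)^2-\frac{1}{2\lambda_k}d\l(x_{k+1}^n,y\r)^2,
\end{equation*}
which, after rearranging, gives the deterministic bound
\begin{equation*}
d\l(x_{k+1}^n,y\r)^2\leq d\l(x_k,y\r)^2-2\lambda_k\l[f_n\l(x_{k+1}^n\r)-f_n(y)\r].
\end{equation*}
Now I would rewrite $f_n(x_{k+1}^n)-f_n(y) = \l[f_n(x_k)-f_n(y)\r] - \l[f_n(x_k)-f_n(x_{k+1}^n)\r]$, so that the first bracket assembles (after summation over $n$) into $f(x_k)-f(y)$, and the second bracket is the error term to be controlled.

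Next, the key estimate on the error term: by~\eqref{eq:lipsrandom} we have $f_n(x_k)-f_n(x_{k+1}^n)\leq L\,d(x_k,x_{k+1}^n)$, and from the defining minimization property of the resolvent $J_{\lambda_k}^n$ one gets, exactly as in~\eqref{eq:mstep} of the proof of Theorem~\ref{thm:cyclic}, that
\begin{equation*}
f_n\l(x_k\r)-f_n\l(x_{k+1}^n\r) \leq \frac{1}{2\lambda_k}d\l(x_k,x_{k+1}^n\r)^2 + \text{(nonneg.)},\qquad d\l(x_k,x_{k+1}^n\r)\leq 2\lambda_k L,
\end{equation*}
hence $f_n(x_k)-f_n(x_{k+1}^n)\leq 2\lambda_k L^2$ for each $n$. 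Plugging this into the deterministic bound for $d(x_{k+1}^n,y)^2$ gives
\begin{equation*}
d\l(x_{k+1}^n,y\r)^2\leq d\l(x_k,y\r)^2-2\lambda_k\l[f_n\l(x_k\r)-f_n(y)\r]+4\lambda_k^2 L^2.
\end{equation*}

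Finally I would take the conditional expectation given $\cf_k$. Since $r_k$ is uniform on $\{1,\dots,N\}$ and independent of $\cf_k$, and since $x_{k+1}=x_{k+1}^{r_k}$ while $x_k$ is $\cf_k$-measurable, we have $\expe\l[d(x_{k+1},y)^2\big|\cf_k\r]=\frac1N\sum_{n=1}^N d\l(x_{k+1}^n,y\r)^2$. Averaging the previous display over $n$ and using $\sum_{n=1}^N\l[f_n(x_k)-f_n(y)\r]=f(x_k)-f(y)$ yields
\begin{equation*}
\expe\l[d\l(x_{k+1},y\r)^2\big|\cf_k\r]\leq d\l(x_k,y\r)^2-\frac{2\lambda_k}{N}\l[f\l(x_k\r)-f(y)\r]+4\lambda_k^2 L^2,
\end{equation*}
which is the claim. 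The one point demanding a little care is the measurability/independence bookkeeping: one must justify that conditioning on $\cf_k=\sigma(x_0,\dots,x_k)$ leaves $r_k$ uniformly distributed and independent, so that the conditional expectation is literally the arithmetic average of the $N$ deterministic quantities $d(x_{k+1}^n,y)^2$; this is immediate from the product structure of $\Omega$ fixed in Definition~\ref{def:random}. The rest is the routine resolvent estimate already carried out in the cyclic case, so I expect no substantive obstacle.
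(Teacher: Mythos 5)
Your proof is correct and follows essentially the same route as the paper's: the single-step estimate of Lemma~\ref{lem:estim}, the decomposition $f_n(x_{k+1}^n)-f_n(y)=[f_n(x_k)-f_n(y)]-[f_n(x_k)-f_n(x_{k+1}^n)]$, the bound $d(x_k,x_{k+1}^n)\le 2\lambda_k L$ obtained from the resolvent's minimizing property together with~\eqref{eq:lipsrandom}, and averaging over the uniform, $\cf_k$-independent choice of $r_k$ --- the only cosmetic difference being that you bound each of the $N$ deterministic quantities $d(x_{k+1}^n,y)^2$ before averaging, whereas the paper takes the conditional expectation first. One transcription slip worth fixing: your intermediate display asserts $f_n(x_k)-f_n(x_{k+1}^n)\le\frac{1}{2\lambda_k}d(x_k,x_{k+1}^n)^2+(\text{nonneg.})$, but the inequality coming from the definition of $J^{n}_{\lambda_k}$ goes the other way, namely $\frac{1}{2\lambda_k}d(x_k,x_{k+1}^n)^2\le f_n(x_k)-f_n(x_{k+1}^n)$, which is what actually combines with~\eqref{eq:lipsrandom} to give $d(x_k,x_{k+1}^n)\le 2\lambda_k L$, as in~\eqref{eq:mstep}.
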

\begin{proof}
By Lemma~\ref{lem:estim} we have
\begin{equation*} d\l(x_{k+1},y\r)^2\leq d\l(x_k,y\r)^2 - 2\lambda_k\l[ f_{r_k}\l(x_{k+1}\r)-f_{r_k}\l(y\r)\r].\end{equation*}
Taking the conditional expectation with respect to $\cf_k$ gives
\begin{equation*} \expe\l[ d\l(x_{k+1},y\r)^2\big| \cf_k\r] \leq d\l(x_k,y\r)^2 - 2\lambda_k\expe\l[ f_{r_k}\l(x_{k+1}\r)-f_{r_k}\l(y\r)\big| \cf_k\r].\end{equation*}
If we denote $x_{k+1}^n$ the result of the iteration with $x_{k}$ when $r_{k}=n,$ we get
\begin{align*}
\expe\l[ d\l(x_{k+1},y\r)^2\big| \cf_k\r] & \leq d\l(x_k,y\r)^2 - \frac{2\lambda_k}N\sum_{n=1}^N\l[ f_n\l(x_{k+1}^n\r)-f_n\l(y\r)\r] \\ & = d\l(x_k,y\r)^2 - \frac{2\lambda_k}N\l[ f\l(x_k\r)-f(y)\r]  \\ &\quad + \frac{2\lambda_k}N\sum_{n=1}^N\l[ f_n\l(x_k\r)-f_n\l(x_{k+1}^n\r)\r] .
\end{align*}
By the assumption~\eqref{eq:lipsrandom} we have
\begin{equation*}\sum_{n=1}^N\l[ f_n\l(x_k\r)-f_n\l(x_{k+1}^n\r)\r]\leq L \sum_{n=1}^N d\l( x_k,x_{k+1}^n\r)\leq 2L^2\lambda_k N,\end{equation*}
since
\begin{equation*} d\l( x_k,x_{k+1}^n\r)\leq 2\lambda_k \frac{f_n\l(x_k\r)-f_n\l(x_{k+1}^n\r)}{d\l( x_k,x_{k+1}^n\r)}\leq 2\lambda_k L.\end{equation*}
We hence finally obtain
\begin{equation*}\expe\l[ d\l(x_{k+1},y\r)^2\big| \cf_k\r] \leq d\l(x_k,y\r)^2 -\frac{2\lambda_k}{N}\l[f\l(x_k\r)-f(y)\r]+4\lambda_k^2 L^2,\end{equation*}
which finishes the proof.
\end{proof}

We now get to the second convergence theorem.
\begin{thm}[Random order version of the PPA] \label{thm:random}
Let $(\hs,d)$ be a locally compact Hadamard space and $f$ be of the form~\eqref{eq:f2} with $\mi(f)\neq\emptyset.$ Assume that the Lipschitz condition~\eqref{eq:lipsrandom} holds true. Then, given a starting point $x_0\in \hs,$ the sequence $\l(x_k\r)$ defined in Definition~\ref{def:random} converges to a minimizer of $f$ almost surely.
\end{thm}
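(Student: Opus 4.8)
The plan is to run Bertsekas' argument in probabilistic form: the deterministic supermartingale lemma (Lemma~\ref{lem:determinmartingale}) that drove Theorem~\ref{thm:cyclic} is replaced by its stochastic counterpart, Theorem~\ref{thm:martingale}, applied to the almost-supermartingale estimate of Lemma~\ref{lem:estimrandom}. First I would fix a minimizer $z\in\mi(f)$ and invoke Lemma~\ref{lem:estimrandom} with $y=z$. Since $f\l(x_k\r)-f(z)\geq0$, setting $Y_k\as d\l(x_k,z\r)^2$, $Z_k\as\frac{2\lambda_k}{N}\l[f\l(x_k\r)-f(z)\r]$ and $W_k\as4\lambda_k^2L^2$ produces nonnegative $\cf_k$-measurable random variables satisfying $\expe\l[Y_{k+1}\big|\cf_k\r]\leq Y_k-Z_k+W_k$, with $\sum_kW_k=4L^2\sum_k\lambda_k^2<\infty$ by~\eqref{eq:stepsize}. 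Theorem~\ref{thm:martingale} then yields, almost surely, that $d\l(x_k,z\r)^2$ converges to a finite limit (hence $\l(x_k\r)$ is bounded) and that $\sum_k\lambda_k\l[f\l(x_k\r)-f(z)\r]<\infty$; combined with $\sum_k\lambda_k=\infty$ from~\eqref{eq:stepsize}, this last fact forces $\liminf_{k\to\infty}f\l(x_k\r)=\min f$ almost surely.

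On the almost sure event just obtained, I would next extract a cluster point. Pick a subsequence with $f\l(x_{k_l}\r)\to\min f$; by local compactness of $\hs$ the bounded sequence $\l(x_{k_l}\r)$ has a further subsequence converging to some $\hat z\in\hs$, and lower semicontinuity of $f$ gives $f(\hat z)\leq\liminf_l f\l(x_{k_l}\r)=\min f$, so $\hat z\in\mi(f)$.

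It then remains to promote $\hat z$ from a cluster point to the actual limit of $\l(x_k\r)$. The subtlety is that $\hat z$ is random, whereas the convergence of $d\l(x_k,z\r)$ above holds, for each fixed $z$, only on an event that depends on $z$. To circumvent this I would use that $\hs$ is separable (a locally compact Hadamard space is proper, whence $\sigma$-compact): fix a countable dense subset $\l\{z_i\r\}_{i\in\nat}$ of $\mi(f)$ and let $\Omega_0$ be the intersection over $i\in\nat$ of the almost sure events on which $d\l(x_k,z_i\r)^2$ converges; then $\Omega_0$ is again almost sure. For $\omega\in\Omega_0$, choose $z_{i_j}\to\hat z=\hat z(\omega)$; since $\bigl|d\l(x_k,\hat z\r)-d\l(x_k,z_{i_j}\r)\bigr|\leq d\l(\hat z,z_{i_j}\r)$ uniformly in $k$ while each sequence $\l(d\l(x_k,z_{i_j}\r)\r)_k$ converges, a routine $\eps/3$ estimate shows $\lim_k d\l(x_k,\hat z\r)$ exists; it equals $0$ because a subsequence of $\l(x_k\r)$ tends to $\hat z$. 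Hence $x_k\to\hat z\in\mi(f)$ on $\Omega_0$, which is the claim.

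I expect the last step to be the main obstacle: unlike the cyclic case, where Lemma~\ref{lem:determinmartingale} can simply be re-applied to the already identified cluster point, here the limit point is genuinely random, so one must combine the Fej\'er-type monotonicity of $\l(d\l(x_k,z\r)\r)_k$ for $z$ ranging over a dense subset of $\mi(f)$ with separability of $\hs$ (which is where local compactness enters beyond the extraction of a cluster point). Everything else is a routine transcription of Lemma~\ref{lem:estimrandom} into the supermartingale framework.
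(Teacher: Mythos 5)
Your proposal is correct and follows essentially the same route as the paper: apply Lemma~\ref{lem:estimrandom} together with the supermartingale convergence theorem (Theorem~\ref{thm:martingale}) for each point of a countable dense subset of $\mi(f)$, intersect the resulting full-measure events, extract a cluster point in $\mi(f)$ via local compactness and lower semicontinuity, and upgrade it to the limit by density. The only cosmetic difference is that the paper works with the dense subset of $\mi(f)$ from the outset rather than first fixing a single minimizer, and it compresses your $\eps/3$ argument into one line.
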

\begin{proof}
Since $\mi(f)$ is a locally compact Hadamard space, its closed balls are compact by the Hopf-Rinow theorem \cite[p.~35]{bh} and consequently it is separable. We can thus choose a countable dense subset $\l(v_i\r)$ of $\mi(f).$ For each $i\in\nat$ apply Lemma~\ref{lem:estimrandom} with $y=v_i$ to obtain
\begin{equation*} \expe\l[ d\l(x_{k+1}(\omega),v_i\r)^2\big| \cf_k\r] \leq d\l(x_k(\omega),v_i\r)^2-\frac{2\lambda_k}{N}\l[f\l(x_k(\omega)\r)-f\l(v_i\r)\r]+4\lambda_k^2 L^2,
\end{equation*}
for every $\omega$ from a full measure set $\Omega_{v_i}\subset\Omega.$ Theorem~\ref{thm:martingale} immediately gives that $d\l(v_i,x_k(\omega)\r)$ converges, and
\begin{equation*} \sum_{k=0}^\infty \lambda_k\l[f\l(x_k(\omega)\r)-\inf f\r]<\infty,\end{equation*}
for every $\omega\in\Omega_{v_i}.$ Next denote
\begin{equation*}\Omega_\infty\as \bigcap_{i\in\nat} \Omega_{v_i},\end{equation*}
which is by countable subadditivity again a set of full measure. The last inequality yields that for $\omega\in\Omega_\infty,$ we have $\liminf_{k\to\infty} f\l(x_k(\omega)\r)= \inf f,$ and since $\l(x_k(\omega)\r)$ is bounded, it has a cluster point $x(\omega)\in \hs.$ By the lower semicontinuity of $f$ we may assume that $x(\omega)\in\mi(f).$

For each $\eps>0$ there exists $v_{i(\eps)}\in\l(v_i\r)$ such that $d\l(x(\omega),v_{i(\eps)}\r)<\eps.$ Because the sequence $d\l(x_k(\omega),v_{i(\eps)}\r)$ converges and $x(\omega)$ is a cluster point of $x_k(\omega),$ we have
\begin{equation*}\lim_{k\to\infty} d\l(x_k(\omega),v_{i(\eps)}\r)<\eps.\end{equation*}
This yields $x_k(\omega)\to x(\omega).$ We obtain that $x_k$ converges to a minimizer almost surely. This finishes the proof.
\end{proof}

\begin{rem} \label{rem:lips}
 The assumptions~\eqref{eq:li} in Theorem~\ref{thm:cyclic}, and~\eqref{eq:lipsrandom} in Theorem~\ref{thm:random} are satisfied, for instance, if
\begin{enumerate}
 \item the functions $f_n$ are Lipschitz on $\hs$ with constant $L,$ or \label{i:lips:i}
 \item the function $f$ is of the form~\eqref{eq:weighted}.      \label{i:lips:ii}
\end{enumerate}
In particular, for both the mean~\eqref{eq:mean2}, and the median~\eqref{eq:median2}. While the Lipschitz condition in~\eqref{i:lips:i} is clear, we note that in case of~\eqref{i:lips:ii}, the PPA sequences are bounded because they lie in the closed convex hull of $\l\{x_0,a_1,\dots,a_N\r\}$ and the functions $f_n\as w_n d\l(\cdot,a_n\r)^p$ are locally Lipschitz.
\end{rem}
To summarize (the most important case of) the results in this section, we state the following corollary.
\begin{cor} \label{cor:summarize}
 Let $(\hs,d)$ be a locally compact Hadamard space and $f$ be of the form~\eqref{eq:f2} with $\mi(f)\neq\emptyset.$ Assume that (at least) one of the following conditions is satisfied:
\begin{enumerate}
 \item the functions $f_n$ are Lipschitz on $\hs$ with constant $L,$ or
 \item the function $f$ is of the form~\eqref{eq:weighted}.
\end{enumerate}
Let $x_0\in \hs.$ Then:
\begin{enumerate}
 \item The sequence defined in Definition~\ref{def:cyclic} converges to a minimizer of $f.$
 \item The sequence defined in Definition~\ref{def:random} converges to a minimizer of $f$ almost surely.
\end{enumerate}
\end{cor}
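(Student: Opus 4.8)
The idea is that Corollary~\ref{cor:summarize} is merely a repackaging of Theorems~\ref{thm:cyclic} and~\ref{thm:random}: once one knows that the Lipschitz-type hypotheses~\eqref{eq:li} and~\eqref{eq:lipsrandom} hold along the iterates, both conclusions are immediate from those two theorems. So the entire task reduces to verifying Remark~\ref{rem:lips}, namely that either of the conditions (i), (ii) forces those hypotheses.

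Under (i) there is nothing to do: if each $f_n$ is $L$-Lipschitz on all of $\hs$, then $f_n(u)-f_n(v)\le L\,d(u,v)$ for arbitrary $u,v\in\hs$, and restricting $u,v$ to the relevant pairs of consecutive iterates yields~\eqref{eq:lipscycl}, \eqref{eq:lipscycl2} and~\eqref{eq:lipsrandom} at once; then Theorems~\ref{thm:cyclic} and~\ref{thm:random} apply directly.

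Under (ii), with $f_n\as w_n d(\cdot,a_n)^p$ and $p\in[1,\infty)$, the functions are only \emph{locally} Lipschitz when $p>1$, so the real work is to confine all iterates to one fixed bounded set on which a uniform Lipschitz constant is available. I would set $R\as\max_{1\le n\le N}d(x_0,a_n)$ and $B\as\bar B(x_0,R)$; note that $B$ is closed and convex, being a sublevel set of the convex function $d(\cdot,x_0)$ of Example~\ref{exa:dist} (and it is compact by the Hopf-Rinow theorem, since $\hs$ is locally compact). The crucial geometric fact is that $J_\lambda^n(x)\in[x,a_n]$ for every $x\in\hs$, $\lambda>0$ and $n$: if the minimizer $y^\ast$ of $y\mapsto w_n d(y,a_n)^p+\tfrac1{2\lambda}d(x,y)^2$ did not lie on $[x,a_n]$, then its metric projection $q$ onto the (compact convex) set $[x,a_n]$ would satisfy $d(q,a_n)\le d(y^\ast,a_n)$ and $d(q,x)<d(y^\ast,x)$ — apply~\eqref{eq:cat} on the geodesic from $q$ to the relevant endpoint and use that $q$ is the nearest point of $[x,a_n]$ to $y^\ast$ — so $q$ would give a strictly smaller value, contradicting minimality. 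An induction on the step index then shows that all the iterates in Definitions~\ref{def:cyclic} and~\ref{def:random}, together with the auxiliary points $x_{k+1}^n$, remain in $B$ (indeed in the closed convex hull of $\{x_0,a_1,\dots,a_N\}\subset B$): each new iterate lies on a geodesic joining a point already in $B$ to some $a_n\in B$, and $B$ is convex. On $B$ one has $d(x,a_n)\le 2R$, and since $t\mapsto t^p$ is Lipschitz on $[0,2R]$ with constant $p(2R)^{p-1}$ while $d(\cdot,a_n)$ is $1$-Lipschitz, each $f_n$ is $L$-Lipschitz on $B$ with $L\as p(2R)^{p-1}$. Hence~\eqref{eq:li} and~\eqref{eq:lipsrandom} hold along the iterates, and Theorems~\ref{thm:cyclic} and~\ref{thm:random} yield the two assertions; specializing $p=1$ recovers the median~\eqref{eq:median2} and $p=2$ the mean~\eqref{eq:mean2}.

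The only ingredient that is not pure bookkeeping is the claim $J_\lambda^n(x)\in[x,a_n]$ and the resulting invariance of the ball $B$; everything else is assembling the two theorems already proved. I therefore expect that geodesic/invariance step — and getting the explicit Lipschitz constant on $B$ correct — to be where the (modest) effort lies.
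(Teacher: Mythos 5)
Your proof is correct and takes essentially the same route as the paper, whose proof simply invokes Remark~\ref{rem:lips}: condition (i) gives the Lipschitz hypotheses trivially, and under condition (ii) one confines all iterates to a bounded convex set containing $\l\{x_0,a_1,\dots,a_N\r\}$ on which the $f_n$ are uniformly Lipschitz, then applies Theorems~\ref{thm:cyclic} and~\ref{thm:random}. The only difference is that you supply the details the paper leaves implicit, namely the verification that $J_\lambda^n(x)\in\l[x,a_n\r]$ (hence the invariance of the ball) and the explicit Lipschitz constant $p(2R)^{p-1}$, and both of these are argued correctly.
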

\begin{proof}
 The proof follows immediately from Remark~\ref{rem:lips}.
\end{proof}
\begin{rem}
 It is easy to observe that if the function $f$ is of the form~\eqref{eq:weighted}, the approximating sequences stay in the closed convex hull of the points $a_1,\dots,a_N.$ If we knew that this closed convex hull is a compact set, we could drop the assumption that $\hs$ is locally compact. Unfortunately, it is not known in a general Hadamard space whether the closed convex hull of a finite set is compact; see \cite[Section 4]{jost94} and also \cite{kopecka-reich}.
\end{rem}

\section{Computing medians and means} \label{sec:mm}

The algorithms from Definitions~\ref{def:cyclic} and~\ref{def:random} can be directly applied to compute means and medians in locally compact Hadamard spaces. We will now give an explicit description of these two special cases. It is interesting to observe how the highly multidimensional optimization problem of minimizing the function~\eqref{eq:multidim} is converted to a sequence of one-dimensional optimization problems of minimizing the function in~\eqref{eq:onedim}; and likewise in the case of medians.

\subsection*{Algorithms for computing means}
Given positive weights $w_1,\dots,w_N$ with $\sum w_n=1$ and points $a_1,\dots,a_N\in \hs,$ we wish to minimize the function
\begin{equation} \label{eq:multidim}
 f(x)\as\sum_{n=1}^N w_n d\l(x,a_n\r)^2, \qquad x\in \hs.
\end{equation}
The existence and uniqueness of a minimizer is assured by Theorem~\ref{thm:average}. Furthermore, the function $f$ is of the form~\eqref{eq:weighted} and according to Corollary~\ref{cor:summarize} satisfies both~\eqref{eq:li} and~\eqref{eq:lipsrandom}. We can therefore employ proximal point algorithms with $f_n=w_n d\l(\cdot,a_n\r)^2,$ for $n=1,\dots,N.$ Let us first consider the cyclic order version from Definition~\ref{def:cyclic}. Let $\l(\lambda_k\r)$ be a sequence of positive reals satisfying~\eqref{eq:stepsize}. We start at some point $x_0\in \hs,$ and for each $k\in\nato$ we set
\begin{align*}
x_{kN+1} & \as J_{\lambda_k}^1\l(x_{kN}\r),\\ x_{kN+2} & \as J_{\lambda_k}^2\l(x_{kN+1}\r),\\ \vdots \\ x_{kN+N} &\as J_{\lambda_k}^N\l(x_{kN+N-1}\r),
\end{align*}
where $J_{\lambda_k}^n$ is now the resolvent of the function $f_n= w_n d\l(\cdot,a_n\r)^2,$ for $n=1,\dots,N.$ It is easy to find these resolvents explicitly. Indeed, fix $k\in\nato$ and $n=1,\dots,N.$ Then $x_{kN+n}$ is the unique minimizer of the function
\begin{equation} \label{eq:onedim}
w_n d\l(\cdot,a_n\r)^2 +\frac1{2\lambda_k} d\l(\cdot,x_{kN+n-1}\r)^2,
\end{equation}
and it is obvious that such a minimizer lies on the geodesic $\l[x_{kN+n-1},a_n\r],$ that is, 
\begin{equation*}x_{kN+n}=\l(1-t_k^n\r) x_{kN+n-1}+ t_k^n a_n,\end{equation*}
for some $t_k^n\in[0,1].$ By an elementary calculation we get
\begin{equation} \label{eq:tkcoef}
t_k^n=\frac{2\lambda_k w_n}{1+2\lambda_k w_n}. 
\end{equation}
The above algorithm then reads:
\begin{alg}[Computing mean, cyclic order version] \label{alg:meancyc}
Given $x_0\in \hs$ and $\l(\lambda_k\r)$ satisfying~\eqref{eq:stepsize} we set
 \begin{align*}
x_{kN+1} & \as \frac{1}{1+2\lambda_kw_1}x_{kN} + \frac{2\lambda_kw_1}{1+2\lambda_kw_1} a_1,\\ x_{kN+2} & \as \frac{1}{1+2\lambda_kw_2} x_{kN+1}+\frac{2\lambda_kw_2}{1+2\lambda_kw_2} a_2,\\ \vdots \\ x_{kN+N} &\as \frac{1}{1+2\lambda_kw_N}x_{kN+N-1}+\frac{2\lambda_kw_N}{1+2\lambda_kw_N}a_N,
\end{align*}
for each $k\in\nato$ and $n=1,\dots,N.$
\end{alg}
The convergence of the sequence $\l(x_j\r)$ produced by Algorithm~\ref{alg:meancyc} to the weighted mean of the points $a_1,\dots,a_N$ follows by Theorem~\ref{thm:cyclic} above. Note that if the weights are uniform, that is, $w_n=\frac1N$ for each $n=1,\dots,N,$ then the coefficients~$t_k^n$ are independent of $n.$

We will now turn to the randomized version from Definition~\ref{def:random}. By a similar process as above we obtain the following algorithm.
\begin{alg}[Computing mean, random order version] \label{alg:meanran}
Let $x_0\in \hs$ be a starting point and $\l(\lambda_k\r)$ satisfy~\eqref{eq:stepsize}. At each step $k\in\nato,$ choose randomly $r_k\in\{1,\dots,N\}$ according to the uniform distribution and put
\begin{equation*} x_{k+1}\as  \frac{1}{1+2\lambda_k w_{r_k}}x_k + \frac{2\lambda_k w_{r_k}}{1+2\lambda_k w_{r_k}} a_{r_k}.\end{equation*}
\end{alg}
The convergence of the sequence $\l(x_k\r)$ produced by Algorithm~\ref{alg:meanran} to the weighted mean of the points $a_1,\dots,a_N$ follows by Theorem~\ref{thm:random} above.

\subsection*{Algorithms for computing medians}
Given positive weights $w_1,\dots,w_N$ with $\sum w_n=1$ and points $a_1,\dots,a_N\in \hs,$ we wish to minimize the function
\begin{equation*} f(x)\as\sum_{n=1}^N w_n d\l(x,a_n\r), \qquad x\in \hs.\end{equation*}
The function $f$ is again of the form~\eqref{eq:f2} with $f_n=w_n d\l(\cdot,a_n\r),$ for $n=1,\dots,N.$ It is Lipschitz, and hence satisfies the assumptions~\eqref{eq:li} and~\eqref{eq:lipsrandom}. In the cyclic order version, we start at some point $x_0\in \hs,$ and for each $k\in\nato$ we set
\begin{align*}
x_{kN+1} & =J_{\lambda_k}^1\l(x_{kN}\r),\\ x_{kN+2} & =J_{\lambda_k}^2\l(x_{kN+1}\r),\\ \vdots \\ x_{kN+N} &=J_{\lambda_k}^N\l(x_{kN+N-1}\r),
\end{align*}
where $J_{\lambda_k}^n$ is the resolvent of the function $f_n=w_n d\l(\cdot,a_n\r),$ for $n=1,\dots,N,$ and $\l(\lambda_k\r)$ is a sequence of positive reals satisfying~\eqref{eq:stepsize}. More specifically, if we fix $k\in\nato$ and $n=1,\dots,N,$ then $x_{kN+n}$ is the unique minimizer of the function
\begin{equation*} w_n d\l(\cdot,a_n\r) +\frac1{2\lambda_k} d\l(\cdot,x_{kN+n-1}\r)^2,\end{equation*}
and it is obvious that such a minimizer lies on the geodesic $\l[x_{kN+n-1},a_n\r],$ that is, 
\begin{equation*}x_{kN+n}=\l(1-t_k^n\r) x_{kN+n-1}+ t_k^n a_n,\end{equation*}
for some $t_k^n\in[0,1].$ These coefficients are again easy to determine. We have to however treat the cyclic and the random case separately.
\begin{alg}[Computing median, cyclic order version] \label{alg:mediancyc}
Given $x_0\in \hs$ and $\l(\lambda_k\r)$ satisfying~\eqref{eq:stepsize} we set
\begin{align*}
x_{kN+1} & \as \l(1-t_k^1 \r) x_{kN} + t_k^1 a_1,\\ x_{kN+2} & \as \l(1-t_k^2 \r) x_{kN+1}+t_k^2 a_2,\\ \vdots \\ x_{kN+N} &\as \l(1-t_k^N \r) x_{kN+N-1}+t_k^N a_N,
\end{align*}
with $t_k^n$ defined by
\begin{equation*} 
t_k^n\as \min\l\{1,\frac{\lambda_k w_n}{d\l(a_n,x_{kN+n-1}\r)}\r\},
\end{equation*}
for each $k\in\nato$ and $n=1,\dots,N.$ 
\end{alg}
The convergence of the sequence $\l(x_j\r)$ produced by Algorithm~\ref{alg:mediancyc} to the median of the points $a_1,\dots,a_N$ with the weights $\l(w_1,\dots,w_N\r)$ follows by Theorem~\ref{thm:cyclic} above. Finally, the randomized version can be derived in a similar way.
\begin{alg}[Computing median, random order version] \label{alg:medianran}
Let $x_0\in \hs$ be a~starting point and $\l(\lambda_k\r)$ satisfies~\eqref{eq:stepsize}. At each step $k\in\nato,$ choose randomly $r_k\in\{1,\dots,N\}$ according to the uniform distribution and put
\begin{equation} \label{eq:medran}
 x_{k+1}\as  \l(1-t_k \r)x_k + t_k a_{r_k},
\end{equation}
with $t_k$ defined by
\begin{equation*} 
t_k\as \min\l\{1,\frac{\lambda_k w_{r_k}}{d\l(a_{r_k},x_k\r)}\r\},
\end{equation*}
for each $k\in\nato.$ 
\end{alg}
The convergence of the sequence $\l(x_k\r)$ produced by Algorithm~\ref{alg:medianran} to the median of the points $a_1,\dots,a_N$ follows by Theorem~\ref{thm:random} above. 

\begin{rem} \label{rem:boris}
Let now take a look at a more general situation mentioned already in the Introduction. Let $C_1,\dots,C_N$ be convex closed subsets of our locally compact Hadamard space $(\hs,d)$ and minimize the function~\eqref{eq:boris}, that is,
\begin{equation*}f(x) \as  \sum_{n=1}^N w_n d\l(x;C_n\r),\qquad x\in \hs,\end{equation*}
where $\ol{w}\as \l(w_1,\dots,w_N\r)$ are again positive weights with $\sum w_n=1.$ We have to assume that at least one of the sets $C_1,\dots,C_N$ is bounded in order to fulfill the assumption $\mi(f)\neq\emptyset$ in Theorems~\ref{thm:cyclic} and~\ref{thm:random}. It is also clear that $f$ is convex and $1$-Lipschitz and thus satisfies both~\eqref{eq:li} and~\eqref{eq:lipsrandom}. Let $P_n$ denote the metric projection onto the set $C_n,$ where $n=1,\dots,N.$ We describe the random version of the PPA algorithm only, the cyclic version being completely analogous.

Let $x_0\in \hs$ be a~starting point and $\l(\lambda_k\r)$ satisfies~\eqref{eq:stepsize}. At each step $k\in\nato,$ choose randomly $r_k\in\{1,\dots,N\}$ according to the uniform distribution and put
\begin{equation*} x_{k+1}\as  \l(1-t_k \r)x_k + t_k P_{r_k}\l(x_k\r), \end{equation*}
with $t_k$ defined by
\begin{equation*} t_k\as \min\l\{1,\frac{\lambda_k w_{r_k}}{d\l(P_{r_k}\l(x_k\r),x_k\r)}\r\}, \end{equation*}
for each $k\in\nato.$ Then the sequence~$\l(x_k\r)$ converges to a minimizer of $f$ by Theorem~\ref{thm:random}.
\end{rem}

\section{Computing means via the law of large numbers} \label{sec:lln}

In this last section we give an alternative algorithm for computing the Fr\'echet mean, which is based on the law of large numbers. The advantage of this approach is that we do not require the underlying Hadamard space be locally compact. We shall also compare this algorithm with Algorithm~\ref{alg:meanran}.

Let again $a_1,\dots,a_N\in\hs$ be a finite set of points and $w_1,\dots,w_N$ be positive weights satisfying $\sum w_n=1.$ Denote the probability measure
\begin{equation} \label{eq:distr}
 \pi\as \sum_{n=1}^N w_n\delta_{a_n},
\end{equation}
where $\delta_{a_n}$ stands for the Dirac measure at $a_n.$ Assume that $Y$ is a random variable with values in $\hs$ distributed according to~$\pi.$ Then the variational inequality~\eqref{eq:varineq} can be written as
\begin{equation} \label{eq:varineq2}
d\l(z,\Xi\r)^2+\expe d\l(\Xi,Y\r)^2 \leq \expe d\l(z,Y\r)^2,\quad z\in\hs,
\end{equation}
where the expectation $\expe$ is of course taken with respect to the distribution $\pi.$

Given a sequence of random variables $Y_k$ with values in $\hs,$ we define a sequence~$(S_k)$ of random variables putting $S_1\as Y_1,$ and
\begin{equation} \label{eq:aritmean}
S_{k+1} \as \frac{k}{k+1} S_k+\frac{1}{k+1}Y_{k+1},
\end{equation}
for $i\in\nat.$ The random variables $Y_k,$ and hence also $S_k,$ are defined on some probability space $\Omega,$ but this space $\Omega$ of course plays no role here. The following theorem due to K.-T.~Sturm states a nonlinear version of the law of large numbers. It appeared in a much more general form in~\cite[Theorem 2.6]{sturm}.
\begin{thm}[The law of large numbers]\label{thm:largenumbers}
Let $(\hs,d)$ be a Hadamard space, and $\l(Y_k\r)$ be a sequence of independent random variables $Y_k\col\Omega\to\hs,$ identically distributed according to the distribution $\pi,$ defined in~\eqref{eq:distr}. Then
\begin{equation*} S_k\to \Xi\l(\ol{w};\ol{x}\r),\quad\text{as } k\to\infty,\end{equation*}
where the convergence is pointwise.
\end{thm}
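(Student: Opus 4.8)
The plan is to deduce the finite-support law of large numbers from the general bound in the variance inequality~\eqref{eq:varineq2}, following the martingale-type argument of Sturm. Write $\mu\as\Xi(\ol w;\ol a)$ for the target barycenter, and set $D_k\as d(S_k,\mu)^2$. The goal is to show $D_k\to0$ almost surely, since pointwise convergence of $S_k$ to $\mu$ is equivalent to $D_k\to0$ pointwise. First I would record the one-step estimate: since $S_{k+1}$ is the point dividing the geodesic $[S_k,Y_{k+1}]$ in ratio $1:k$, the CAT(0) inequality~\eqref{eq:cat} applied with $t=\frac1{k+1}$, $z=\mu$ gives
\begin{equation*}
d(S_{k+1},\mu)^2 \leq \frac{k}{k+1}\,d(S_k,\mu)^2 + \frac{1}{k+1}\,d(Y_{k+1},\mu)^2 - \frac{k}{(k+1)^2}\,d(S_k,Y_{k+1})^2 .
\end{equation*}

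Next I would take the conditional expectation with respect to $\cf_k\as\sigma(Y_1,\dots,Y_k)$. Since $Y_{k+1}$ is independent of $\cf_k$ and distributed according to $\pi$, we get $\expe[d(Y_{k+1},\mu)^2\mid\cf_k]=\expe_\pi d(\cdot,\mu)^2=:\sigma^2<\infty$ (a finite constant because $\pi$ has finite support), and, more importantly, the variance inequality~\eqref{eq:varineq2} applied at the (random but $\cf_k$-measurable) point $z=S_k$ gives
\begin{equation*}
\expe\bigl[d(S_k,Y_{k+1})^2\,\big|\,\cf_k\bigr] \;\geq\; d(S_k,\mu)^2 + \expe_\pi d(\mu,\cdot)^2 \;=\; D_k + \sigma^2 .
\end{equation*}
Substituting, the $\sigma^2$ terms combine favourably and one obtains
\begin{equation*}
\expe\bigl[D_{k+1}\,\big|\,\cf_k\bigr] \;\leq\; \Bigl(\frac{k}{k+1} - \frac{k}{(k+1)^2}\Bigr) D_k + \frac{1}{k+1}\sigma^2 - \frac{k}{(k+1)^2}\sigma^2 \;=\; \frac{k^2}{(k+1)^2} D_k + \frac{\sigma^2}{(k+1)^2}.
\end{equation*}
This is an almost-supermartingale relation of the type handled by Theorem~\ref{thm:martingale}: writing $\expe[D_{k+1}\mid\cf_k]\leq D_k - \bigl(1-\frac{k^2}{(k+1)^2}\bigr)D_k + \frac{\sigma^2}{(k+1)^2}$, the "noise" term $\sum_k \frac{\sigma^2}{(k+1)^2}$ is summable, so $D_k$ converges almost surely to a finite limit $D_\infty$, and moreover $\sum_k \bigl(1-\frac{k^2}{(k+1)^2}\bigr)D_k = \sum_k \frac{2k+1}{(k+1)^2}D_k<\infty$ almost surely.

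Finally I would argue $D_\infty=0$: the coefficients $\frac{2k+1}{(k+1)^2}$ are $\sim \frac2k$, hence $\sum_k \frac{2k+1}{(k+1)^2}=\infty$, so a convergent-to-$D_\infty$ sequence $D_k$ can have $\sum_k \frac{2k+1}{(k+1)^2}D_k<\infty$ only if $D_\infty=0$. This gives $S_k\to\mu$ almost surely; to upgrade "almost surely" to the "pointwise" (i.e.\ everywhere, for every realization of the $Y_k$) statement in the theorem, I would invoke that $\pi$ is finitely supported, so the statement "$S_k\to\mu$ for every sequence $(y_k)\in\{a_1,\dots,a_N\}^{\nat}$" is what must be checked, and since $\pi$ assigns positive mass to each $a_n$, every such sequence is in the support of the product measure and the almost-sure statement already covers it.

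The main obstacle I anticipate is the bookkeeping in matching my almost-supermartingale inequality to the exact hypotheses of Theorem~\ref{thm:martingale}: one must verify nonnegativity of all three sequences $Y_k=D_k$, $Z_k=\frac{2k+1}{(k+1)^2}D_k$, $W_k=\frac{\sigma^2}{(k+1)^2}$ and their $\cf_k$-measurability, which is routine, and then the delicate point — extracting $D_\infty=0$ from the divergence of $\sum_k\frac{2k+1}{(k+1)^2}$ together with $\sum_k\frac{2k+1}{(k+1)^2}D_k<\infty$ and the convergence $D_k\to D_\infty$ — which is a short but genuine elementary argument. Everything else (the CAT(0) one-step bound, the use of~\eqref{eq:varineq2}, finiteness of $\sigma^2$) is immediate from the results already established.
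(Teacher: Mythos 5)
Your core argument is correct, and it takes a genuinely different route from the paper's. Both proofs start from the identical one-step estimate (apply \eqref{eq:cat} with $t=\tfrac1{k+1}$, $z=\Xi$, then use independence and the variance inequality \eqref{eq:varineq2} to bound $\expe\, d(S_k,Y_{k+1})^2$ from below), and both arrive at
\begin{equation*}
\expe\bigl[\,d(S_{k+1},\Xi)^2\bigr]\;\leq\;\frac{k^2}{(k+1)^2}\,\expe\bigl[\,d(S_k,\Xi)^2\bigr]+\frac{\xi}{(k+1)^2}.
\end{equation*}
The paper then closes the argument by a direct induction, obtaining the explicit quantitative bound $\expe\, d(\Xi,S_k)^2\leq\xi/k$; this gives $L^2$-convergence with a rate (and the paper stops there, so strictly speaking its written proof only yields convergence in $L^2$, not the pointwise statement). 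You instead keep the inequality in conditional-expectation form and feed it into the supermartingale convergence theorem (Theorem~\ref{thm:martingale}), concluding $D_k\to D_\infty$ a.s.\ with $\sum_k\frac{2k+1}{(k+1)^2}D_k<\infty$ a.s., and then extract $D_\infty=0$ from the divergence of $\sum_k\frac{2k+1}{(k+1)^2}$. This buys genuine almost-sure convergence, which actually matches the theorem's ``pointwise'' claim better than the paper's own argument, at the cost of losing the explicit $O(1/k)$ rate. Your bookkeeping for Theorem~\ref{thm:martingale} (nonnegativity, $\cf_k$-measurability of $D_k$, summability of the noise term) is indeed routine and correct.

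One step of your write-up is wrong, although it is aimed at a claim stronger than what the theorem asserts and is therefore dispensable: the attempted upgrade from ``almost surely'' to ``for every realization.'' Membership in the support of the product measure does not imply membership in every full-measure set; indeed the constant sequence $y_k=a_1$ for all $k$ lies in the support but gives $S_k\equiv a_1\neq\Xi$ in general, so convergence for literally every sequence in $\{a_1,\dots,a_N\}^{\nat}$ is false (for $N\geq 2$). The word ``pointwise'' in the theorem should be read as pointwise almost everywhere on $\Omega$, i.e.\ almost surely, which your supermartingale argument already delivers; simply delete the final upgrade paragraph.
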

\begin{proof} First denote
\begin{equation*} \xi \as \min_{x\in\hs} \sum_{n=1}^N w_n d\l(x,a_n\r)^2.\end{equation*}
We show by induction on $k\in\nat$ that
\begin{equation} \label{eq:largenum1}
\expe d\l(\Xi,S_k\r)^2\leq\frac1{k}\xi.
\end{equation}
It obviously holds for $k=1$ and we assume it holds for some $k\in\nat.$ We have
\begin{align*}
\expe d\l(\Xi,S_{k+1}\r)^2 & =\expe d\l(\Xi,\frac{k}{k+1}S_k+\frac{1}{k+1}Y_{k+1}\r)^2 ,\\
\intertext{by~\eqref{eq:cat} we get}
 & \leq\frac{k}{k+1}\expe d\l(\Xi,S_k \r)^2 + \frac{1}{k+1}\expe d\l(\Xi,Y_{k+1}\r)^2 - \frac{k}{(k+1)^2}\expe d\l( Y_{k+1},S_k \r)^2, 
\\
\intertext{and applying independence and~\eqref{eq:varineq2} gives}
 & \leq \frac{k}{k+1}\expe d\l(\Xi,S_k\r)^2  + \frac{1}{k+1}\expe d\l(\Xi,Y_{k+1}\r)^2 \\ & \qquad - \frac{k}{(k+1)^2}\expe \l[ d\l(\Xi,S_k \r)^2 + d\l(\Xi,Y_{k+1}\r)^2\r] \\
& = \l(\frac{k}{k+1} \r)^2\expe d\l(\Xi,S_k\r)^2+\frac1{(k+1)^2}\xi\\ &\leq\frac1{k+1}\xi.
\end{align*}
This shows that~\eqref{eq:largenum1} holds, and hence the proof is complete.
\end{proof}

One can rather straightforwardly convert Theorem~\ref{thm:largenumbers} into an approximation algorithm for computing the Fr\'echet mean. Let us now describe such an algorithm. It receives the points $a_1,\dots,a_N$ and weights $w_1,\dots,w_N$ as the input, and at each iteration $k\in\nat$ it produces a new point $s_k\in\hs,$ which is an approximate version of the desired mean $\Xi\as \Xi\l(\ol{w};\ol{x}\r)$ in the sense that $d\l(s_k,\Xi\r)\to0$ as $k\to\infty.$ The sequence is defined as follows. At each step $k\in\nato,$ choose randomly $r_k\in\{1,\dots,N\}$ according to the distribution $\ol{w}=\l(w_1,\dots,w_N\r)$ and put
\begin{equation} \label{eq:llnalg}
 s_{k+1}\as\frac{k}{k+1} s_k+\frac{1}{k+1} a_{r_k}.
\end{equation}
The convergence of this algorithm is guaranteed by Theorem~\ref{thm:largenumbers}.

We shall now compare the algorithm~\eqref{eq:llnalg} with Algorithm~\ref{alg:meanran}. Let us first consider the \emph{unweighted} case, that is, $w_n=\frac1N$ for every $n=1,\dots,N.$ At each iteration $k\in\nato,$ the algorithm~\eqref{eq:llnalg} selects $r_k\in\{1,\dots,N\}$ according to the \emph{uniform} distribution and generates a new point
\begin{equation*}s_{k+1}\as\frac{k}{k+1} s_k+\frac{1}{k+1} a_{r_k}.\end{equation*}
In Algorithm~\ref{alg:meanran}, at each step $k\in\nato$ we randomly choose a number $r_k\in\{1,\dots,N\}$ according to the \emph{uniform} distribution and put
\begin{equation*} x_{k+1}\as \frac{1}{1+2\lambda_k}x_k + \frac{2\lambda_k }{1+2\lambda_k} a_{r_k}.\end{equation*}
Thus Algorithm~\ref{alg:meanran}  produces the same sequence as the algorithm~\eqref{eq:llnalg} provided we set $\lambda_k\as\frac1{2k}$ for each $k\in\nat.$ In other words the algorithm~\eqref{eq:llnalg} is a special case of Algorithm~\ref{alg:meanran}. 

On the other hand as far as \emph{weighted} Fr\'echet means are concerned, there exists a difference between these two algorithms. Indeed, if $\ol{w}\as\l(w_1,\dots,w_N\r)$ are the weights, then the algorithm~\eqref{eq:llnalg} selects $r_k\in\{1,\dots,N\}$ according to the \emph{distribution}~$\ol{w}$ and generates a new point
\begin{equation*}s_{k+1}\as\frac{k}{k+1} s_k+\frac{1}{k+1} a_{r_k},\end{equation*}
that is, with the same coefficients as in the unweighted case. Algorithm~\ref{alg:meanran} in contrast still selects $r_k\in\{1,\dots,N\}$ according to the \emph{uniform} distribution, but the new point is given by
\begin{equation*} x_{k+1}= \frac{1}{1+2\lambda_k w_{r_k}}x_k + \frac{2\lambda_k w_{r_k}}{1+2\lambda_k w_{r_k}} a_{r_k},\end{equation*}
that is, the coefficients now do depend on the weights. In summary, introducing weights effects either the coefficients (Algorithm~\ref{alg:meanran}), or the probability distribution which is used for selecting the points $a_1,\dots,a_N$ (the algorithm~\eqref{eq:llnalg}).

\subsection*{Final remarks} \label{finalremarks}
Notice that all the algorithms presented in this paper require finding a geodesic at each iteration. For instance, in Algorithm~\ref{alg:medianran}, we need to find the geodesic $\l[x_k,a_{r_k}\r]$ at each step $k\in\nato,$ or more precisely, we need to compute the point $x_{k+1}$ which lies on this geodesic. When employing these algorithms in the BHV tree space, we can use the Owen-Provan algorithm (mentioned in the Introduction) to find this point in polynomial time.


\bibliographystyle{siam}
\bibliography{median-mean}

\end{document}